\documentclass{amsart}
\usepackage{graphicx}
\usepackage{amssymb}

\newtheorem*{Beurling}{Beurling's Criterion}
\newtheorem*{Fuglede}{Fuglede's Lemma}
\newtheorem{theorem}{Theorem}
\newtheorem{corollary}{Corollary}
\newtheorem{lemma}{Lemma}
\theoremstyle{remark}
\newtheorem{remark}{Remark}
\newtheorem{example}{Example}

\newcommand{\RR}{\mathbb{R}}
\newcommand{\RS}{\hat{\mathbb{C}}}
\newcommand{\Mod}{\mathop\mathrm{mod}\nolimits}
\newcommand{\Image}{\mathop\mathrm{Im}\nolimits}

\newcommand{\res}{\hbox{ {\vrule height .22cm}{\leaders\hrule\hskip.2cm} } }

\begin{document}

\title[Beurling's criterion and extremal metrics]{Beurling's criterion and extremal metrics for Fuglede modulus}
\author{Matthew Badger}
\date{November 26, 2012}
\address{Department of Mathematics\\ Stony Brook University\\ Stony Brook, NY 11794-3651}
\email{badger@math.sunysb.edu}
\thanks{The author was partially supported by an NSF postdoctoral fellowship, grant DMS-1203497.}
\keywords{Beurling's criterion, extremal metric, modulus, extremal length}
\subjclass[2010]{Primary 31B15. Secondary 28A33, 49K27.}
\begin{abstract} For each $1\leq p<\infty$, we formulate a necessary and sufficient condition for an admissible metric to be extremal for the Fuglede $p$-modulus of a system of measures. When $p=2$, this characterization generalizes Beurling's criterion, a sufficient condition for an admissible metric to be extremal for the extremal length of a planar curve family. In addition, we prove that every Borel function $\varphi:\RR^n\rightarrow[0,\infty]$ satisfying $0<\int\varphi^p<\infty$ is extremal for the $p$-modulus of some curve family in $\RR^n$.
\end{abstract}
\maketitle

\section{Introduction}

In this note we take a close look at extremal metrics for systems of measures and families of curves. Let us start by recalling Fuglede's definition of modulus \cite{F}. Fix once and for all a measure space $(X,\mathcal{M},m)$. A collection of measures $\mathbf{E}$ is a \emph{measure system} \emph{(over $\mathcal{M}$)} if each measure $\mu\in\mathbf{E}$ is defined on the $\sigma$-algebra  $\mathcal{M}$. A Borel function  $\varphi:X\rightarrow[0,\infty]$ is called a \emph{metric} and is said to be \emph{admissible} for $\mathbf{E}$ if $\int \varphi\, d\mu\geq 1$ for all $\mu\in\mathbf{E}$. (We do not identify two metrics which agree $m$-a.e.)  For each $0<p<\infty$, the  \emph{$p$-modulus} of $\mathbf{E}$ is given by $$ \Mod_p\mathbf{E} = \inf\left\{\int\varphi^p\,dm:\varphi\text{ is admissible for }\mathbf{E}\right\}$$ where $\Mod_p\mathbf{E}=\infty$ if admissible metrics for $\mathbf{E}$ do not exist.

\begin{example}\label{curves} To pick a concrete setting, take $(X,\mathcal{M},m)=(\RR^n,\mathcal{B}_n,m_n)$ where $m_n$ is the Lebesgue measure on the Borel subsets $\mathcal{B}_n$ of $\RR^n$. A \emph{(locally rectifiable) curve} $\gamma$ in $\RR^n$ is a concatenation (disjoint union) of countably many images of one-to-one Lipschitz maps $\gamma_i:[a_i,b_i]\rightarrow\RR^n$. Each image $\gamma_i([a_i,b_i])$ is called a \emph{piece} of $\gamma$; curves may have disjoint or overlapping pieces. (For an alternative definition of a curve, see \cite{V}.) The \emph{trace} of a curve $\gamma$ is the set $\bigcup_i\gamma_i([a_i,b_i])$, i.e.~ the union of the pieces of $\gamma$.  For every curve $\gamma$ in $\RR^n$ there is a Borel measure $\tilde\gamma$ on $\RR^n$ such that the line integral $$\int_\gamma f\,ds=\sum_i \int_{a_i}^{b_i} f(\gamma_i(t))|\gamma'_i(t)|dt$$ is given by integration against $\tilde\gamma$, i.e.\ $\int_\gamma f\,ds = \int_{\RR^n} f\,d\tilde\gamma$ for every Borel function $f$. (By the area formula $\tilde \gamma = \sum_i \tilde\gamma_i$ where $\tilde\gamma_i=H^1\res\gamma_i([a_i,b_i])$ is the 1-dimensional Hausdorff measure restricted to the set $\gamma_i([a_i,b_i])$, e.g.~ see \cite{EG}.) For all $1\leq p<\infty$, the \emph{$p$-modulus} of a family of curves $\Gamma$ in $\RR^n$ is defined in terms of Fuglede modulus by $\Mod_p\Gamma =\Mod_p\left\{\tilde\gamma: \gamma\in \Gamma\right\}$. Therefore, $\Mod_p\Gamma=\inf_\varphi \int_{\RR^n}\varphi^p$ $dm_n$ where the infimum runs over all Borel functions $\varphi\geq 0$ such that $\int_\gamma\varphi\,ds\geq 1$ for all $\gamma\in\Gamma$. (One could similarly define $\Mod_p\Gamma$ for $0<p<1$, but this quantity is always zero.) In the plane, the \emph{extremal length} $\lambda(\Gamma)=1/\Mod_2\Gamma$ of a curve family $\Gamma$ in $\RR^2$ is often used instead of its modulus. \end{example}

An \emph{atom} in a $\sigma$-algebra $\mathcal{M}$ is a nonempty set $A\in\mathcal{M}$ with the property $B\subsetneq A$, $B\in\mathcal{M}\Rightarrow B=\emptyset$. That is, the only proper measurable subset of an atom is the empty set. If $\varphi:X\rightarrow[0,\infty]$ is a Borel function on $(X,\mathcal{M})$, then $\varphi$ is constant on each atom of $\mathcal{M}$. Given an atom $A\in\mathcal{M}$, the \emph{atomic measure} $\delta_A$ is defined by the rule $\delta_A(S)=1$ if $A\subset S$ and $\delta_A(S)=0$ otherwise;  $\int \varphi\,d\delta_A=\varphi(A)$ for all $\varphi$ and $A$.

\begin{example}\label{transboundary} Let $\mathcal{K}=\{K_1,\dots, K_\ell\}$ be a finite set of pairwise disjoint compact subsets of the Riemann sphere $\RS$, and let $\Omega\subset\RS$ be an open set. The \emph{transboundary measure space} $(\RS,\mathcal{M}_\mathcal{K},m_{\Omega,\mathcal{K}})$ is defined as follows. Let $\mathcal{B}(\RS\setminus K)$ denote the Borel $\sigma$-algebra on the complement of $K=\bigcup_{i=1}^{\ell}K_i$. Then $\mathcal{M}_\mathcal{K}$ is the smallest $\sigma$-algebra generated by $\mathcal{B}(\RS\setminus K)\cup\mathcal{K}$. The atoms of $\mathcal{M}_{\mathcal{K}}$ are the singletons $\{x\}$ with $x\in\RS\setminus K$ and the sets $K_1,\dots, K_\ell$. We define the measure $m_{\Omega,\mathcal{K}}=H^2\res (\Omega\setminus K) + \sum_{i=1}^{\ell}\delta_{K_i}$ where $H^2\res(\Omega\setminus K)$ is 2-dimensional Hausdorff measure on $\Omega\setminus K$.
Let $\gamma:[a,b]\rightarrow\RS$ be a one-to-one continuous map, let $\Image\gamma=\gamma([a,b])$ be its image, and assume that $\Image\gamma\cap (\Omega\setminus K)$ is locally rectifiable. Then we define a measure $\hat \gamma$ on $(\RS,\mathcal{M}_{\mathcal{K}})$ by $$\hat{\gamma}=H^1\res\Image\gamma\cap(\Omega\setminus K) + \sum_{i:\,\Image\gamma\cap K_i\neq\emptyset}\delta_{K_i}.$$ Suppose that $(X,\mathcal{M},m)=(\RS,\mathcal{M}_{\mathcal{K}},m_{\Omega,\mathcal{K}})$. The \emph{transboundary modulus} $\Mod_{\Omega,\mathcal{K}}\Gamma$ of a collection $\Gamma$ of one-to-one continuous maps $\gamma:[a,b]\rightarrow\RS$ is defined via Fuglede modulus by $\Mod_{\Omega,\mathcal{K}}\Gamma=\Mod_2\{\hat \gamma:\gamma\in\Gamma$ and $\Image\gamma\cap(\Omega\setminus K)$ is locally rectifiable$\}.$ Thus, an admissible metric $\varphi:\RS\rightarrow[0,\infty]$ satisfies  $$\int_{\Image\gamma\cap(\Omega\setminus K)}\varphi\,ds + \sum_{i:\,\Image\gamma\cap K_i\neq\emptyset} \varphi(K_i)\geq 1$$ for every $\gamma\in\Gamma$ such that $\Image\gamma\cap (\Omega\setminus K)$ is locally rectifiable, and $$\Mod_{\Omega,\mathcal{K}}\Gamma=\inf_{\varphi}\int_{\Omega\setminus K} \varphi^2\,dH^2 + \sum_{i=1}^\ell \varphi(K_i)^2$$ where the infimum runs over all admissible metrics $\varphi:\RS\rightarrow[0,\infty]$. The reciprocal $\lambda_{\Omega,\mathcal{K}}(\Gamma)=1/\Mod_{\Omega,\mathcal{K}}\Gamma$ of transboundary modulus is \emph{transboundary extremal length}.
\end{example}

The definition of extremal length is due to Beurling and has roots in the classical length-area principle for conformal maps; see Jenkins \cite{J} for a historical overview. Since the introduction of extremal length by Ahlfors and Beurling \cite{AB}, the modulus of a curve family has become a widely-used tool, employed in geometric function theory \cite{A,GM,R}, quasiconformal and quasiregular mappings \cite{V,Vu}, dynamical systems \cite{KL,M}, and analysis on metric spaces \cite{Hebook,He}. The transboundary extremal length of a curve family was introduced by Schramm \cite{S} to study uniformization on countably-connected domains. Recently Bonk \cite{B} used transboundary modulus in a crucial way to obtain uniformization results on Sierpi\'nski carpets in the plane. For applications of modulus of measures, see Fuglede's original applications in \cite{F}, Hakobyan's work on the conformal dimension of sets \cite{H}, and Bishop and Hakobyan's recent paper on the frequency of dimension distortion by quasisymmetric maps \cite{BH}.

A few nice properties of modulus are apparent from the definition. First if $\mathbf{E}\subset\mathbf{F}$ then $\Mod_p\mathbf{E}\leq \Mod_p\mathbf{F}$. Second $\Mod_p\bigcup_{i=1}^\infty\mathbf{E}_i\leq \sum_{i=1}^\infty\Mod_p\mathbf{E}_i$ for any sequence of measure systems. Since $\Mod_p\emptyset=0$, this says that modulus is an outer measure on measure systems. A third useful property is that every admissible metric gives an upper bound on modulus, i.e.\ $\Mod_p\mathbf{E}\leq\int\varphi^p\,dm$ for all admissible metrics $\varphi$.

If the infimum in the definition of the modulus of a measure system $\mathbf{E}$ is obtained by an admissible metric $\varphi$, i.e.\ if $\Mod_p\mathbf{E}=\int\varphi^p\,dm$, then the metric $\varphi$ is said to be \emph{extremal} for the $p$-modulus of $\mathbf{E}$. Naturally one may ask whether an extremal metric always exists, and if so, to what extent is an extremal metric uniquely determined. Unfortunately simple examples (see Example \ref{zero} below) show that the existence and uniqueness of extremal metrics fails for general measure systems. Nevertheless, Fuglede \cite{F} proved that when $1<p<\infty$, a measure system always admits an extremal metric, after removing an exceptional system of measures.

\begin{Fuglede} Let $1<p<\infty$. Let $\mathbf{E}$ be a measure system. If $\Mod_p\mathbf{E}<\infty$, then there exists a measure system $\mathbf{N}\subset\mathbf{E}$ such that $\Mod_p\mathbf{N}=0$ and $\mathbf{E}\setminus\mathbf{N}$ admits an extremal metric $\varphi$.\end{Fuglede}

The uniqueness of an extremal metric for the $p$-modulus of a measure system also holds when $1<p<\infty$, up to redefinition of the metric on a set of $m$-measure zero. This can be seen as follows. Suppose that $\varphi,\psi\in L^p(m)$ are two extremal metrics for the $p$-modulus of a measure system $\mathbf{E}$. Then the averaged metric $\chi=\frac12\varphi+\frac12\psi$ is still admissible for $\mathbf{E}$ and $(\Mod_p\mathbf{E})^{1/p}\leq \|\chi\|_p \leq \frac{1}{2}\|\varphi\|_p+\frac{1}{2}\|\psi\|_p = (\Mod_p\mathbf{E})^{1/p}$. Thus, $\|\frac{1}{2}\varphi+\frac{1}{2}\psi\|_p=\frac{1}{2}\|\varphi\|_p+\frac{1}{2}\|\psi\|_p$. By the condition for equality in Minkowski's inequality and the assumption that $\|\varphi\|_p=\|\psi\|_p<\infty$, one obtains $\varphi=\psi$ $m$-a.e., as desired.

A fundamental problem working with modulus is to identify an extremal metric for a given measure system or curve family if one exists. Beurling found a general sufficient condition which guarantees that an admissible metric for a curve family in the plane is extremal for its extremal length.

\begin{Beurling}[Ahlfors \cite{A}, Theorem 4.4] Let $\Gamma$ be a curve family in $\RR^2$ and let $\varphi$ be an admissible metric for $\Gamma$ such that $0<\int_{\RR^2} \varphi^2<\infty$. Suppose that there exists a curve family $\Gamma_0$ in $\RR^2$ such that \begin{enumerate}
    \item $\Gamma_0\subset\Gamma$,
    \item $\int_\gamma \varphi\, ds=1$ for every $\gamma\in\Gamma_0$, and
    \item for all $f\in L^2(\RR^2)$ taking values in $[-\infty,\infty]$: if $\int_\gamma f\,ds\geq 0$ for all $\gamma\in\Gamma_0$, then $\int_{\RR^2} f\varphi\geq 0$.
\end{enumerate}
Then $\varphi$ is an extremal metric for the extremal length of $\Gamma$, i.e.\ $\lambda(\Gamma)=\left(\int_{\RR^2} \varphi^2\right)^{-1}$.
\end{Beurling}

Let us see Beurling's criterion in action, in a standard example.

\begin{example}\label{rect1} Let $R$ be a rectangle with side lengths $a\leq b$. Let $\Gamma$ be the family of all curves in $R$ with connected trace which join opposite edges in $R$ (see Figure 1). We claim that $\varphi=\frac{1}{a}\chi_R$ is an extremal metric for $\Gamma$, and thus, $$\lambda(\Gamma)=\left(\int_R \frac{1}{a^2}\right)^{-1}=a/b.$$ First $\varphi$ is admissible for $\Gamma$, because every curve connecting opposite edges in $R$ travels at least Euclidean distance $a$ (the distance between the edges of length $b$). Beurling's criterion holds with $\Gamma_0=\{\gamma(t):t\in[0,b]\}$ equal to the family of straight line segments connecting (and orthogonal to) a pair of opposite sides of length $b$. Conditions (1) and (2) hold by definition. And (3) follows from Fubini's theorem: if $\int_{\gamma(t)} f\,ds\geq 0$ for all $\gamma(t)\in\Gamma_0$, then $\int_{\RR^2} f\varphi = \frac{1}{a}\int_R f=\frac{1}{a}\int_0^b \int_{\gamma(t)}f\,ds\,dt\geq 0.$ Therefore, $\varphi$ is extremal for $\lambda(\Gamma)$. \begin{figure}\includegraphics[width=.7\textwidth]{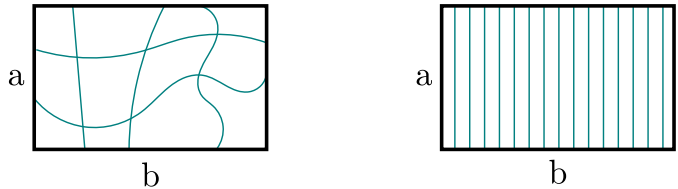}\caption{Curve families $\Gamma$ and $\Gamma_0$ in Examples 3 and 4}\end{figure}\end{example}

The converse to Beurling's criterion fails for the simple reason that $\Gamma$ may not contain any curves $\gamma$ such that $\int_{\gamma} \varphi\,ds=1$.

\begin{example} Once again let $R$ be a rectangle with side lengths $a\leq b$, and let $\Gamma$ and $\Gamma_0$ be the curve families from Example \ref{rect1}. We claim that $\varphi=\frac{1}{a}\chi_R$ is an extremal metric for $\Gamma_*=\Gamma\setminus\Gamma_0$. However, since $\Gamma_*$ does not contain any curves $\gamma$ such that $\int_{\gamma}\varphi\,ds=1$, we cannot use Beurling's criterion to show that $\varphi$ is extremal for $\Gamma_*$. Let $\psi$ be an admissible metric for $\Gamma_*$. Fix $\gamma(t)\in\Gamma_0$. Then one can find a sequence of curves $\gamma^k(t)\in\Gamma_*$ such that $\int_{\gamma^k(t)}\psi\,ds\rightarrow\int_{\gamma(t)}\psi\,ds$. (For example, if $\gamma(t)=[0,a]$, then take $\gamma^k(t)=[1/k,0]\sqcup[0,a]$ where $\sqcup$ denotes concatenation.) In particular, it follows that $\int_{\gamma(t)}\psi\,ds \geq 1$ for all $\gamma(t)\in \Gamma_0$. Integrating across all $\gamma(t)\in\Gamma_0$, invoking Fubini's theorem and applying the Cauchy-Schwarz inequality gives $$b \leq \int_0^b\int_{\gamma(t)}\psi\,ds\,dt = \int_R\psi\leq\left(\int_R\psi^2\right)^{1/2}(ab)^{1/2}\leq \left(\int_{\RR^2}\psi^2\right)^{1/2}(ab)^{1/2}.$$ Thus, $\left(\int_{\RR^2} \psi^2\right)^{-1}\leq a/b$ for every metric $\psi$ that is admissible for $\Gamma_*$. Since this upper bound is obtained by $\varphi$, we conclude that $\varphi$ is extremal for $\lambda(\Gamma_*)$.\end{example}

A partial converse to Beurling's criterion is presented in Ohtsuka \cite[\S2.3]{O} in the special case $\Gamma=\Gamma_0$: \emph{if $\varphi$ is extremal for $\Gamma$, then (3) holds for all $f\in L^2(\RR^2)$}. Wolf and Zwiebach \cite[p.~38]{WZ} have also established ``a partial local converse to Beurling's criterion" for certain classes of metrics on Riemann surfaces.

\section{Statement of Results}

The main goal of this note is to show that Beurling's criterion can be modified to become a necessary and sufficient test for extremal metrics. In fact, we establish a characterization of extremal metrics in the general setting of Fuglede modulus, when $1<p<\infty$ and when $p=1$.

\begin{theorem}[Extremal Metrics in $L^p$] \label{LpBeurling} Let $1<p<\infty$. Let $\mathbf{E}$ be a measure system and let $\varphi$ be an admissible metric for $\mathbf{E}$ such that $\varphi\in L^p(m)$. Then $\varphi$ is extremal for the $p$-modulus of $\mathbf{E}$ if and only if
\begin{enumerate}
\item[$(B_p)$] There exists a measure system $\mathbf{F}$ such that
   \begin{enumerate}
       \item $\Mod_p\mathbf{E}\cup\mathbf{F}=\Mod_p\mathbf{E}$,
       \item $\int\varphi\, d\nu=1$ for every $\nu\in\mathbf{F}$, and
       \item for all $f\in L^p(m)$ taking values in $[-\infty,\infty]$: if $\int\! f\,d\nu\geq 0$ for all $\nu\in\mathbf{F}$, then $\int f\varphi^{p-1}dm\geq 0$.
   \end{enumerate}
\end{enumerate}
\end{theorem}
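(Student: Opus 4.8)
The plan is to prove both implications by reducing everything to Hölder's inequality with the conjugate exponent $p'=p/(p-1)$, guided by one observation: the measure $\nu_0$ with density $\varphi^{p-1}$ relative to $m$ is the natural ``dual certificate'' attached to $\varphi$, since $\int f\,d\nu_0=\int f\varphi^{p-1}\,dm$, so that condition $(c)$ is really a comparison between the integrals $\int f\,d\nu$ over $\nu\in\mathbf{F}$ and this density integral.

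For sufficiency, assume $(B_p)$ holds for some $\mathbf{F}$. First I would use $(a)$ to rewrite $\Mod_p\mathbf{E}=\Mod_p(\mathbf{E}\cup\mathbf{F})$ and thus compute the modulus as an infimum of $\int\psi^p\,dm$ over metrics $\psi$ admissible for the larger system $\mathbf{E}\cup\mathbf{F}$; this is the crucial role of $(a)$, because such $\psi$ automatically satisfy $\int\psi\,d\nu\ge 1=\int\varphi\,d\nu$ for every $\nu\in\mathbf{F}$ by $(b)$. Fixing such a $\psi$ (we may assume $\psi\in L^p(m)$, since the infimum is unaffected by metrics with infinite integral) and setting $f=\psi-\varphi$, I would check that $\int f\,d\nu\ge 0$ for all $\nu\in\mathbf{F}$ and invoke $(c)$ to obtain $\int(\psi-\varphi)\varphi^{p-1}\,dm\ge 0$, i.e.\ $\int\psi\varphi^{p-1}\,dm\ge\int\varphi^p\,dm$. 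Hölder's inequality then gives $\int\psi\varphi^{p-1}\,dm\le\|\psi\|_p\,\|\varphi^{p-1}\|_{p'}=\|\psi\|_p\,\|\varphi\|_p^{p-1}$, using $(p-1)p'=p$. Combining the two estimates and cancelling $\|\varphi\|_p^{p-1}$ yields $\|\varphi\|_p\le\|\psi\|_p$; taking the infimum over admissible $\psi$ gives $\int\varphi^p\,dm\le\Mod_p(\mathbf{E}\cup\mathbf{F})=\Mod_p\mathbf{E}$, while the reverse inequality $\Mod_p\mathbf{E}\le\int\varphi^p\,dm$ is immediate from admissibility of $\varphi$, so $\varphi$ is extremal.

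For necessity, assume $\varphi$ is extremal. When $\Mod_p\mathbf{E}=0$ I would take $\mathbf{F}=\emptyset$ (then $\int\varphi^p\,dm=0$, so $\varphi^{p-1}=0$ $m$-a.e.\ and $(c)$ holds trivially), so suppose $\Mod_p\mathbf{E}>0$ and set $\mathbf{F}=\{\nu_0\}$ with $d\nu_0=(\Mod_p\mathbf{E})^{-1}\varphi^{p-1}\,dm$. Extremality, namely $\int\varphi^p\,dm=\Mod_p\mathbf{E}$, is exactly what makes the normalization work: $\int\varphi\,d\nu_0=(\Mod_p\mathbf{E})^{-1}\int\varphi^p\,dm=1$, which is $(b)$ and also shows $\varphi$ is admissible for $\mathbf{E}\cup\{\nu_0\}$; hence $\Mod_p(\mathbf{E}\cup\{\nu_0\})\le\int\varphi^p\,dm=\Mod_p\mathbf{E}$, and monotonicity forces equality, giving $(a)$. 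Condition $(c)$ is then immediate, since $\int f\,d\nu_0$ and $\int f\varphi^{p-1}\,dm$ differ by the positive factor $(\Mod_p\mathbf{E})^{-1}$ and hence share the same sign.

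The conceptual point I expect to be the real crux is recognizing that one should \emph{not} hunt for ``active constraint'' measures inside $\mathbf{E}$ (as the curve subfamily $\Gamma_0$ in Beurling's original formulation might suggest), but instead use the single density measure $\varphi^{p-1}\,dm$; once this is done, the content of extremality collapses to the identity $\int\varphi\cdot\varphi^{p-1}\,dm=\Mod_p\mathbf{E}$. The remaining difficulties are technical rather than conceptual: justifying the arithmetic of $\int(\psi-\varphi)\,d\nu$ when $\psi$ and $\varphi$ take the value $+\infty$ and $\nu$ is not absolutely continuous with respect to $m$ (handled by $\int\varphi\,d\nu=1<\infty$ together with nonnegativity, so the integral splits), and treating the degenerate Hölder cases $\|\varphi\|_p\in\{0,\infty\}$ (the hypothesis $\varphi\in L^p(m)$ rules out $\infty$, and $\|\varphi\|_p=0$ forces $\int\varphi^p\,dm=0\le\int\psi^p\,dm$ trivially).
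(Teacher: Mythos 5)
Your proposal is correct, and its sufficiency half coincides with the paper's argument: set $f=\psi-\varphi$, apply (c), then H\"older with exponents $p$ and $p/(p-1)$, and cancel $\|\varphi\|_p^{p-1}$ after disposing of the degenerate cases. The necessity half, however, takes a genuinely different and much shorter route. The paper takes $\mathbf{F}$ to be the family of \emph{all} measures $\nu$ on $\mathcal{M}$ with $\int\varphi\,d\nu=1$; condition (c) is then a nontrivial variational statement, proved by discarding $\mathbf{E}_\infty=\{\mu\in\mathbf{E}:\int\varphi\,d\mu=\infty\}$ (which has zero modulus), checking that the perturbation $\varphi_\varepsilon=(\varphi+\varepsilon f)^+$ is admissible for $\mathbf{E}_0=\{\mu\in\mathbf{E}:1\leq\int\varphi\,d\mu<\infty\}$, and letting $\varepsilon\rightarrow 0+$ via a mean-value-theorem lemma (Lemma \ref{plemma}) and dominated convergence. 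Your choice $\mathbf{F}=\{\nu_0\}$ with $d\nu_0=(\Mod_p\mathbf{E})^{-1}\varphi^{p-1}\,dm$ collapses all of this: (b) is precisely the extremality identity, (a) follows from monotonicity together with admissibility of $\varphi$ for $\mathbf{E}\cup\{\nu_0\}$, and (c) is a tautology because $\int f\,d\nu_0$ and $\int f\varphi^{p-1}\,dm$ differ by a positive factor (both are finite for $f\in L^p(m)$, again by H\"older); the case $\Mod_p\mathbf{E}=0$ is correctly handled by $\mathbf{F}=\emptyset$. This is a legitimate proof of Theorem \ref{LpBeurling} as stated, since the theorem places no constraint tying $\mathbf{F}$ to $\mathbf{E}$ beyond (a). What the paper's heavier argument buys is exactly what your $\nu_0$ cannot deliver: because its $\mathbf{F}$ consists of measures normalized against $\varphi$, the same proof shows that $\mathbf{F}$ may be chosen inside the scalings $\{c\mu:\mu\in\mathbf{E},\ 0<c\leq 1\}$ (the paper's remark following the theorems), and, after replacing measures $\nu$ with $\int\varphi\,d\nu=1$ by subcurves $\gamma$ with $\int_\gamma\varphi\,ds=1$, it transfers to curve families (Corollaries \ref{Lpcurve} and \ref{L1curve}, Remark \ref{subcurve}, and ultimately Theorem \ref{curvemetrics}), where the auxiliary system is required to consist of curve measures $\tilde\gamma$. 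Your $\nu_0$ is absolutely continuous with respect to $m$ and is not a curve measure, so your construction does not carry over to those settings. In short, your proof exposes that $(B_p)$ with unconstrained $\mathbf{F}$ is weaker than it looks, while the paper's variational route carries the extra content needed for the rest of the paper.
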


\begin{theorem}[Extremal Metrics in $L^1$]\label{L1Beurling} Let $\mathbf{E}$ be a measure system and let $\varphi$ be an admissible metric for $\mathbf{E}$ such that $\varphi\in L^1(m)$. Then $\varphi$ is extremal for the $1$-modulus of $\mathbf{E}$ if and only if
\begin{enumerate}
\item[$(B_1)$] There exists a measure system $\mathbf{F}$ such that
   \begin{enumerate}
       \item $\Mod_1\mathbf{E}\cup\mathbf{F}=\Mod_1\mathbf{E}$,
       \item $\int\varphi\, d\nu=1$ for every $\nu\in\mathbf{F}$, and
       \item for all $f\in L^1(m)$ taking values in $[-\infty,\infty]$ such that $\varphi(x)=0$ implies $f(x)\geq 0$: if $\int f\,d\nu\geq 0$ for all $\nu\in\mathbf{F}$, then $\int f\, dm\geq 0$.   \end{enumerate}
\end{enumerate}
\end{theorem}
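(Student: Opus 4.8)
The plan is to read condition $(B_1)$ as a first-order optimality certificate for the (infinite-dimensional) linear program of minimizing $\psi\mapsto\int\psi\,dm$ over the convex set of metrics admissible for $\mathbf{E}$, and to prove the two implications separately. Throughout I use the paper's convention that metrics and test functions are honest $[-\infty,\infty]$-valued Borel functions (not $m$-a.e.\ classes), so that $\int f\,d\nu$ is meaningful even when $\nu$ is singular with respect to $m$; in particular a point mass $\delta_x$ satisfies $\int g\,d\delta_x=g(x)$.

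For sufficiency I would prove $\int\varphi\,dm\le\Mod_1\mathbf{E}$, the reverse being immediate from admissibility. The device is to invoke (a) to compute $\Mod_1\mathbf{E}=\Mod_1(\mathbf{E}\cup\mathbf{F})$ as an infimum over metrics $\psi$ admissible for $\mathbf{E}\cup\mathbf{F}$ rather than merely for $\mathbf{E}$; this is precisely what unlocks the inequalities $\int\psi\,d\nu\ge1$ for $\nu\in\mathbf{F}$, which admissibility for $\mathbf{E}$ alone does not supply. Given such a $\psi$ with $\int\psi\,dm<\infty$ (otherwise there is nothing to prove), I set $f=\psi-\varphi\in L^1(m)$ and verify the two hypotheses of (c): on $\{\varphi=0\}$ one has $f=\psi\ge0$, and for each $\nu\in\mathbf{F}$, using (b), $\int f\,d\nu=\int\psi\,d\nu-\int\varphi\,d\nu\ge1-1=0$. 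Then (c) yields $\int f\,dm\ge0$, i.e.\ $\int\psi\,dm\ge\int\varphi\,dm$. Since $\varphi$ is itself admissible for $\mathbf{E}\cup\mathbf{F}$ by (b), taking the infimum gives $\Mod_1(\mathbf{E}\cup\mathbf{F})=\int\varphi\,dm$, so $\varphi$ is extremal.

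For necessity I would exhibit an explicit witnessing system. Let $\delta_x$ be the point mass at $x$ and set $\mathbf{F}=\{\varphi(x)^{-1}\delta_x:0<\varphi(x)<\infty\}$. Condition (b) is immediate since $\int\varphi\,d(\varphi(x)^{-1}\delta_x)=1$, and (a) follows from extremality: $\varphi$ is admissible for $\mathbf{E}\cup\mathbf{F}$, whence $\Mod_1(\mathbf{E}\cup\mathbf{F})\le\int\varphi\,dm=\Mod_1\mathbf{E}$, while $\mathbf{E}\subseteq\mathbf{E}\cup\mathbf{F}$ gives the reverse inequality by monotonicity. For (c), the test $\int f\,d(\varphi(x)^{-1}\delta_x)=f(x)/\varphi(x)\ge0$ forces $f(x)\ge0$ at every $x$ with $0<\varphi(x)<\infty$; combined with the standing hypothesis that $f\ge0$ on $\{\varphi=0\}$ and with $m(\{\varphi=\infty\})=0$ (as $\varphi\in L^1$), this yields $f\ge0$ $m$-a.e.\ and hence $\int f\,dm\ge0$.

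I expect the only genuine friction to lie in the bookkeeping special to the value $p=1$ and in the extended reals. The characteristic feature of $p=1$ is that the conclusion of (c) is the \emph{unweighted} $\int f\,dm\ge0$: there is no factor $\varphi^{p-1}$ to annihilate the set $\{\varphi=0\}$ as in $(B_p)$ for $p>1$, so that set must be governed directly, which is exactly the purpose of the side condition $\varphi(x)=0\Rightarrow f(x)\ge0$. I would take care that $f=\psi-\varphi$ truly belongs to the test class: it may equal $-\infty$ or be indeterminate, but only on the $m$-null set $\{\varphi=\infty\}$, which no $\nu\in\mathbf{F}$ charges because $\int\varphi\,d\nu=1<\infty$, so $\int f\,d\nu$ and $\int f\,dm$ are unambiguous. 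A final conceptual point worth stressing is that condition (a) is the right replacement for Beurling's containment $\Gamma_0\subseteq\Gamma$: it is lax enough to admit the witness above (or, in concrete settings, a natural saturating subfamily that need not lie in $\mathbf{E}$), yet rigid enough to license the passage to $\mathbf{E}\cup\mathbf{F}$-admissible competitors in the sufficiency argument.
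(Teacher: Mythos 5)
Your proposal is correct, and its sufficiency half is essentially the paper's own argument: take a competitor $\psi$ admissible for $\mathbf{E}\cup\mathbf{F}$, set $f=\psi-\varphi$, check the two hypotheses of (c), and then use (a) to pass from extremality over $\mathbf{E}\cup\mathbf{F}$ back to $\mathbf{E}$ (your care with the $m$-null set $\{\varphi=\infty\}$, which no $\nu\in\mathbf{F}$ charges, is a point the paper glosses over). The necessity half, however, takes a genuinely different and more elementary route. The paper chooses $\mathbf{F}$ to be the family of \emph{all} measures $\nu$ on $\mathcal{M}$ with $\int\varphi\,d\nu=1$ and verifies (c) variationally: given a test function $f$, it forms the perturbed metric $\varphi_\varepsilon=(\varphi+\varepsilon f)^+$, shows $\varphi_\varepsilon$ is admissible for $\mathbf{E}_0=\{\mu\in\mathbf{E}:1\leq\int\varphi\,d\mu<\infty\}$ (because each $\mu\in\mathbf{E}_0$ rescales into $\mathbf{F}$), invokes extremality of $\varphi$ for $\mathbf{E}_0$ to get $\int\varphi\,dm\leq\int_{P_\varepsilon}(\varphi+\varepsilon f)\,dm$ with $P_\varepsilon=\{\varphi+\varepsilon f>0\}$, and then lets $\varepsilon\rightarrow 0+$. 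Your witness $\mathbf{F}=\{\varphi(x)^{-1}\delta_x:0<\varphi(x)<\infty\}$ bypasses all of this: the hypothesis of (c) becomes the pointwise statement that $f\geq 0$ on $\{0<\varphi<\infty\}$, which combined with the side condition on $\{\varphi=0\}$ and with $m(\{\varphi=\infty\})=0$ gives $f\geq 0$ $m$-a.e.\ at once. This is legitimate --- Dirac masses are measures on $\mathcal{M}$, and the paper itself uses scaled atomic measures in just this way in the proof of Theorem \ref{xmetrics} --- so your argument does prove Theorem \ref{L1Beurling} as stated. What the paper's longer route buys is precisely what your degenerate witness cannot deliver: it verifies (c) using only measures of the form $c\mu$ with $\mu\in\mathbf{E}$, which yields the refinement in Remark 2 (the witness $\mathbf{F}$ may be taken inside rescalings of $\mathbf{E}$) and, more importantly, survives the passage to curve families in Corollary \ref{L1curve} and section \ref{modify}, where $\mathbf{F}$ must consist of curve measures and point masses are unavailable; there the perturbation argument, run along subcurves on which $\int\varphi\,ds=1$, is essential.
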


\begin{remark} We label the conditions $(B_p)$ in Theorems \ref{LpBeurling} and \ref{L1Beurling} in honor of Beurling. When $p=2$ and $\mathbf{F}\subset\mathbf{E}$, (a) holds vacuously and $(B_2)$ is Beurling's criterion.\end{remark}

\begin{remark} In Theorems \ref{LpBeurling} and \ref{L1Beurling}, if $\varphi$ is extremal for $\Mod_p\mathbf{E}$, then there exists $\mathbf{F}$ satisfying $(B_p)$ such that for every $\nu\in\mathbf{F}$ there exist $\mu\in\mathbf{E}$ and $0<c\leq 1$ such that $\nu=c\mu$.\end{remark}

\begin{remark} In Theorems \ref{LpBeurling} and \ref{L1Beurling} the case $\mathbf{F}=\emptyset$ is allowed. Note condition $(B_p)$ holds with $\mathbf{F}=\emptyset$ if and only if $\varphi=0$ $m$-almost everywhere.\end{remark}

The proofs of Theorems \ref{LpBeurling} and \ref{L1Beurling} will be given in sections \ref{lpsect} and \ref{l1sect}, respectively. (A curious reader may jump to the proofs immediately.) We now demonstrate the use of the theorems in a simple, yet enlightening example, which shows the varied behavior of the $p$-modulus for different values of $p$.

\begin{example}\label{zero} Assume that $A\in\mathcal{M}$ and $0<m(A)<\infty$. Let $\mathbf{E}_A=\{m\res A\}$ where $m\res A$ denotes the measure $m$ restricted to the set $A$. Then $$\Mod_p\mathbf{E}_A=\left\{\begin{array}{ll}
\inf\{m(B)^{1-p}:B\subset A,\  m(B)>0\},\quad &\text{if }0<p\leq 1,\\ m(A)^{1-p}, &\text{if }1\leq p<\infty.\end{array}\right.$$
This will be checked in four steps.

Let $1<p<\infty$. We will show that $\varphi_A = m(A)^{-1}\chi_A$ is extremal for $\Mod_p\mathbf{E}_A$, and hence, $\Mod_p\mathbf{E}_A =\int_A m(A)^{-p}\,dm=m(A)^{1-p}.$ Clearly $\varphi_A\in L^p(m)$ and $\varphi_A$ is admissible for $\mathbf{E}_A$. Let us check that $(B_p)$ holds with $\mathbf{F}=\mathbf{E}_A$. Conditions (a) and (b) hold immediately. For condition (c), $\int f\varphi_A^{p-1}\,dm= m(A)^{1-p}\int_A f\,dm\geq 0$ whenever $\int f\,d(m\res A)\geq 0$. Thus, $\varphi_A$ is extremal for $\Mod_p\mathbf{E}_A$, by Theorem \ref{LpBeurling}.

The case $p=1$ is similar, except that there is no longer a unique extremal metric. Let $B\subset A$ be any subset such that $m(B)>0$ and let $\varphi_B=m(B)^{-1}\chi_B$. Then $\varphi_B\in L^1(m)$ and $\varphi_B$ is admissible for $\mathbf{E}_A$. We will check that $(B_1)$ holds with $\mathbf{F}=\mathbf{E}_A$. Conditions (a) and (b) are immediate. To verify condition (c) of $(B_1)$, assume that $f\in L^1(m)$ takes values in $[-\infty,\infty]$, $f(x)\geq 0$ whenever $\varphi_B(x)=0$ and $\int f\,d(m\res A)\geq 0$. Then $\int f\,dm = \int_{A^c} f\,dm + \int_A f\,dm\geq 0$, where the first term is non-negative since $\varphi_B(x)=0$ on $A^c$. Thus, $\varphi_B$ is extremal for $\Mod_1\mathbf{E}_A$, by Theorem \ref{L1Beurling}, so that $\Mod_1\mathbf{E}_A = \int \varphi_B\,dm = 1$ for every $B\subset A$ with $m(B)>0$.

Next let $0<p<1$ and suppose that $A$ has subsets of arbitrarily small positive measure. Then we can find a sequence of subsets $B_k\subset A$ with $m(B_k)>0$ such that $\lim_{k\rightarrow\infty} m(B_k)=0$. The normalized characteristic functions $\varphi_k=m(B_k)^{-1}\chi_{B_k}$ are  admissible for $\mathbf{E}_A$. Hence $\Mod_p\mathbf{E}_A \leq \int\varphi_k^p\,dm=m(B_k)^{1-p}\rightarrow 0$ as $k\rightarrow\infty$, since $0<p<1$. Therefore, $\Mod_p\mathbf{E}_A=0=\inf\{m(B)^{1-p}:B\subset A,\  m(B)>0\}$. However, there is no extremal metric for $\Mod_p\mathbf{E}_A$, because no function $\psi\geq 0$ satisfies $\int\psi\,d(m\res A)\geq 1$ and $\int \psi^p\,dm=0$ simultaneously.

Finally, let $0<p<1$, but suppose that $A$ does not possess subsets of arbitrarily small positive measure. Then $m\res A= c_1\delta_{A_1}+\dots+c_k\delta_{A_k}$ is a linear combination of atomic measures, where each atom $A_i\in\mathcal{M}$ and $A_i\cap A_j=\emptyset$ whenever $i\neq j$. By relabeling, we may assume that $0<c_1\leq c_j$ for all $2\leq j\leq k$. Let $\rho\geq 0$ be an admissible metric for $\mathbf{E}_A$ such that $\int\rho\,d(m\res A)=1$. (Here we can ask for equality, because $\mathbf{E}_A$ consists of one element.) Define $\eta_j=\rho(A_j)c_j$ for all $j$. Then $\sum_{j=1}^k\eta_j = \sum_{j=1}^k (\eta_j/c_j)c_j= \int\rho\,d(m\res A)=1$. Thus, $0\leq \eta_j\leq 1$ for all $j$, and  $$\int \rho^p\,dm\geq \sum_{j=1}^k(\eta_j/c_j)^pc_j=\sum_{j=1}^k \eta_j^p c_j^{1-p}\geq \sum_{j=1}^k\eta_j c_1^{1-p}=c_1^{1-p}.$$ Since the lower bound $\int\rho^p\,dm\geq c_1^{1-p}$ is obtained by the metric $\rho=m(A_1)^{-1}\chi_{A_1}$, we conclude that $\Mod_p\mathbf{E}_A=m(A_1)^{1-p}=\inf\{m(B)^{1-p}:B\subset A,\  m(B)>0\}$.
\end{example}

\begin{remark} With the same notation as in the previous example, $\varphi_A=m(A)^{-1}\chi_A$ also satisfies condition $(B_p)$ with $\mathbf{F}=\mathbf{E}_A$ for $0<p<1$. But $\Mod_p \mathbf{E}_A\neq \int \varphi_A^p\,dm$ when $0<p<1$ unless $m\res A=c\delta_{A}$. Thus, Example \ref{zero} shows that condition $(B_p)$ from Theorem \ref{LpBeurling} is not a sufficient test for extremal metrics when $0<p<1$. \end{remark}

The characterizations of extremal metrics for the $p$-modulus of measure systems in Theorems \ref{LpBeurling} and \ref{L1Beurling} also hold for curve families in $\RR^n$. In particular, assuming that an extremal metric for $\mathbf{E}=\{\tilde \gamma:\gamma\in\Gamma\}$ exists, one can find a measure system $\mathbf{F}$ satisfying condition $(B_p)$, where $\mathbf{F}$ is also associated to a family of curves in $\RR^n$.

\begin{corollary}[Extremal Metrics in $L^p$ for Curves] \label{Lpcurve} Let $1<p<\infty$. Let $\Gamma$ be a curve family in $\RR^n$ and let $\varphi$ be an admissible metric for $\Gamma$ such that $\varphi\in L^p(\RR^n)$. Then $\varphi$ is extremal for the $p$-modulus of $\Gamma$ if and only if
\begin{enumerate}
\item[$(B'_p)$] There exists a curve family $\Gamma'$ in $\RR^n$ such that
   \begin{enumerate}
       \item $\Mod_p\Gamma\cup\Gamma'=\Mod_p\Gamma$,
       \item $\int_{\gamma}\varphi\, ds=1$ for every $\gamma\in\Gamma'$, and
       \item for all $f\in L^p(\RR^n)$ taking values in $[-\infty,\infty]$: if $\int_\gamma f\,ds\geq 0$ for all $\gamma\in\Gamma'$, then $\int_{\RR^n} f\varphi^{p-1}\geq 0$.
   \end{enumerate}
\end{enumerate}
\end{corollary}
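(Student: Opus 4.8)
The plan is to transfer Theorem \ref{LpBeurling} from measure systems to curve families through the correspondence $\gamma\mapsto\tilde\gamma$, so that $\mathbf{E}=\{\tilde\gamma:\gamma\in\Gamma\}$, and then to upgrade the abstract measure system $\mathbf{F}$ produced by that theorem into one arising from an actual curve family. For the \emph{sufficiency} direction I would take $\mathbf{F}=\{\tilde{\gamma'}:\gamma'\in\Gamma'\}$. Since $\int_{\gamma'}g\,ds=\int g\,d\tilde{\gamma'}$ for every Borel $g$, and since $\Mod_p\Gamma=\Mod_p\mathbf{E}$ and $\Mod_p(\Gamma\cup\Gamma')=\Mod_p(\mathbf{E}\cup\mathbf{F})$ by definition, conditions (a), (b), (c) of $(B'_p)$ are literally conditions (a), (b), (c) of $(B_p)$ for this $\mathbf{F}$. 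As $\varphi\in L^p(\RR^n)=L^p(m_n)$ is admissible for $\mathbf{E}$, Theorem \ref{LpBeurling} immediately gives that $\varphi$ is extremal for $\Mod_p\mathbf{E}=\Mod_p\Gamma$. So this direction is essentially bookkeeping.

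The content is in the \emph{necessity} direction. Assuming $\varphi$ is extremal, Theorem \ref{LpBeurling} together with the observation that an extremal metric admits an $\mathbf{F}$ satisfying $(B_p)$ whose members have the form $\nu=c\mu$ with $\mu\in\mathbf{E}$ and $0<c\le 1$ supplies measures $\nu=c_\nu\tilde{\gamma_\nu}$, where $\gamma_\nu\in\Gamma$ and $c_\nu\int_{\gamma_\nu}\varphi\,ds=1$, so that $\int_{\gamma_\nu}\varphi\,ds=1/c_\nu\ge 1$. First I would record that condition (a) comes for free: once $\Gamma'$ is built so that $\int_{\gamma'}\varphi\,ds=1$ for all $\gamma'\in\Gamma'$, the extremal $\varphi$ is admissible for $\Gamma'$, hence for $\Gamma\cup\Gamma'$, and so $\Mod_p(\Gamma\cup\Gamma')\le\int\varphi^p\,dm_n=\Mod_p\Gamma\le\Mod_p(\Gamma\cup\Gamma')$, forcing equality. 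Thus only (b) and (c) must be arranged, and the whole problem reduces to replacing each scalar multiple $c_\nu\tilde{\gamma_\nu}$ by genuine curve measures of unit $\varphi$-length.

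To do this I would, for each $\nu$, subdivide $\gamma_\nu$ into finitely many pieces of equal (small) $\varphi$-measure and form \emph{unit windows}: curves obtained by concatenating (taking the disjoint union of) consecutive pieces, read cyclically, until the total $\varphi$-length is exactly $1$. Let $\Gamma'$ be the collection of all such windows over all $\nu$ and all subdivisions; then (b) holds by construction. The key claim is that $c_\nu\tilde{\gamma_\nu}$ lies in the closed cone generated by $\{\tilde{\gamma'}:\gamma'\text{ a window of }\gamma_\nu\}$: averaging the window measures over all cyclic starting points covers each piece the same number of times, so the rolling average converges to $c_\nu\tilde{\gamma_\nu}$ as the subdivision is refined. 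Granting this, if $\int_{\gamma'}f\,ds\ge 0$ for every $\gamma'\in\Gamma'$, then averaging and passing to the limit gives $\int f\,d\nu=c_\nu\int_{\gamma_\nu}f\,ds\ge 0$ for each $\nu$; condition (c) of $(B_p)$ for $\mathbf{F}$ then yields $\int f\varphi^{p-1}\,dm_n\ge 0$, which is exactly condition (c) of $(B'_p)$.

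I expect the cyclic-averaging claim to be the main obstacle. The difficulty is that when $c_\nu<1$ the measure $c_\nu\tilde{\gamma_\nu}$ is \emph{not} itself a curve measure (a curve measure $\sum_i H^1\res\gamma_i$ has integer density on its trace), so it can be reached only in the closed cone of honest unit-$\varphi$-length curves, and the non-integer total $\varphi$-length $1/c_\nu$ means that consecutive geometric sub-arcs always leave a fractional remainder that cannot be cancelled. Treating $\gamma_\nu$ as a disjoint union of pieces and rolling the selection cyclically is what makes the coverage uniform and confines the defect to a single leftover piece of arbitrarily small $\varphi$-measure; the remaining technical point is to show that this leftover contributes nothing in the limit to $\int f\,ds$, which should follow from the integrability that is built into the class of $f$ for which the line integrals in (c) are defined.
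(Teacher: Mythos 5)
Your proposal is correct, but in the necessity direction it takes a genuinely different route from the paper. The paper never tries to recover the measure-system hypothesis $\int f\,d\nu\ge 0$ for scaled curve measures: instead it re-runs the variational argument of Theorem \ref{LpBeurling} directly on curves, taking $\Gamma'$ to be \emph{all} curves of unit $\varphi$-length, and the only new ingredient is that every $\gamma\in\Gamma_0=\{\gamma\in\Gamma:1\le\int_\gamma\varphi\,ds<\infty\}$ contains an initial subcurve $\gamma_1$ with $\int_{\gamma_1}\varphi\,ds=1$ (by continuity of $c\mapsto\int_{a_i}^{c}\varphi(\gamma_i(t))|\gamma_i'(t)|\,dt$ on each piece); since $\varphi_\varepsilon=(\varphi+\varepsilon f)^+\ge 0$, monotonicity gives $\int_\gamma\varphi_\varepsilon\,ds\ge\int_{\gamma_1}\varphi_\varepsilon\,ds\ge 1$, so $\varphi_\varepsilon$ is admissible for $\Gamma_0$ and the computation of Section \ref{lpsect} finishes with no limiting procedure at all. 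Your reduction to the abstract theorem via cyclic windows does work, and it even produces a smaller auxiliary family $\Gamma'$, but two points carry real weight. First, you must invoke the paper's Remark 2 (that $\mathbf{F}$ can be taken to consist of measures $c\mu$ with $\mu\in\mathbf{E}$, $0<c\le 1$), which the paper states without proof; it is justified because the proof of Theorem \ref{LpBeurling} only ever uses the scaled measures $\mu/\int\varphi\,d\mu$, and it is essential for you, since with the $\mathbf{F}$ that proof literally constructs (all measures of unit $\varphi$-mass) the hypothesis of (c) could never be verified from curve data alone. Second, your ``closed cone'' cannot mean weak-$*$ closure: $f$ is merely an extended-real-valued $L^p$ function, so weak convergence of $\frac{1}{N}\sum_j \tilde{W}_j$ to $c_\nu\tilde{\gamma}_\nu$ yields nothing by itself. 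The correct execution is pointwise: each window integral being defined and $\ge 0$ forces $\int_{W_j}f^-\,ds<\infty$; finitely many windows cover $\gamma_\nu$, so $\int_{\gamma_\nu}f^-\,ds<\infty$; the coverage densities $m_N/N$ converge uniformly to $c_\nu$; and splitting $f=f^+-f^-$ (the case $\int_{\gamma_\nu}f^+\,ds=\infty$ being trivial) gives $\int f\,d\nu\ge 0$. (Also, your windows must cut their final piece fractionally so that condition (b) holds exactly, since $1/c_\nu$ need not be a multiple of the piece size.) In short, the paper's device buys economy --- one subcurve per curve and no limits --- while yours buys a clean logical reduction to the measure-system theorem at the price of this averaging lemma, which, as sketched, is the part still to be written down.
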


\begin{corollary}[Extremal Metrics in $L^1$ for Curves]\label{L1curve} Let $\Gamma$ be a curve family in $\RR^n$ and let $\varphi$ be an admissible metric for $\Gamma$ such that $\varphi\in L^1(\RR^n)$. Then $\varphi$ is extremal for the $1$-modulus of $\Gamma$ if and only if
\begin{enumerate}
\item[$(B'_1)$] There exists a curve family $\Gamma'$ in $\RR^n$ such that
   \begin{enumerate}
       \item $\Mod_1\Gamma\cup\Gamma'=\Mod_1\Gamma$,
       \item $\int_\gamma\varphi\, ds=1$ for every $\gamma\in\Gamma'$, and
       \item for all $f\in L^1(\RR^n)$ taking values in $[-\infty,\infty]$ such that $\varphi(x)=0$ implies $f(x)\geq 0$: if $\int_\gamma f\,ds\geq 0$ for all $\gamma\in\Gamma'$, then $\int_{\RR^n} f\geq 0$.   \end{enumerate}
\end{enumerate}
\end{corollary}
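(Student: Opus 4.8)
The plan is to deduce Corollary \ref{L1curve} from Theorem \ref{L1Beurling} by viewing a curve family as a measure system under the correspondence $\gamma\mapsto\tilde\gamma$, which turns every line integral $\int_\gamma g\,ds$ into $\int g\,d\tilde\gamma$. Writing $\mathbf{E}=\{\tilde\gamma:\gamma\in\Gamma\}$, so that $\Mod_1\Gamma=\Mod_1\mathbf{E}$, the two conditions $(B'_1)$ and $(B_1)$ become term-for-term translations of one another as soon as the auxiliary measure system $\mathbf{F}$ in $(B_1)$ is itself realized by a curve family $\mathbf{F}=\{\tilde{\gamma'}:\gamma'\in\Gamma'\}$. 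Thus the entire content of the corollary is the assertion that, for curve systems, the measure system $\mathbf{F}$ produced by Theorem \ref{L1Beurling} can be taken to be a curve family.

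For the \emph{if} direction this is immediate: given $\Gamma'$ satisfying $(B'_1)$, set $\mathbf{F}=\{\tilde{\gamma'}:\gamma'\in\Gamma'\}$ and read off $(B_1)$(a)--(c) directly from $(B'_1)$(a)--(c) using $\int g\,d\tilde{\gamma'}=\int_{\gamma'}g\,ds$; Theorem \ref{L1Beurling} then gives extremality of $\varphi$. For the \emph{only if} direction, suppose $\varphi$ is extremal for $\Mod_1\Gamma$. Theorem \ref{L1Beurling} yields a measure system $\mathbf{F}_0$ satisfying $(B_1)$, and by Remark 2 each $\nu\in\mathbf{F}_0$ equals $c\,\tilde\gamma$ for some $\gamma\in\Gamma$; condition $(B_1)(b)$ forces $c=\left(\int_\gamma\varphi\,ds\right)^{-1}\in(0,1]$, so $\int_\gamma\varphi\,ds=1/c\ge1$. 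I would then take for $\Gamma'$ the family of \emph{all} curves $\gamma'$ in $\RR^n$ with $\int_{\gamma'}\varphi\,ds=1$. Condition $(B'_1)(b)$ holds by definition, and $(B'_1)(a)$ holds because $\varphi$ remains admissible for every unit curve, whence $\Mod_1(\Gamma\cup\Gamma')\le\int_{\RR^n}\varphi\,dm=\Mod_1\Gamma$, the reverse inequality being monotonicity.

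The crux is condition $(B'_1)(c)$, where the scaled measures $c\,\tilde\gamma$ (supported on genuinely \emph{slack} curves when $c<1$) must be replaced by honest unit curves. Fix $f\in L^1(\RR^n)$ with $\varphi(x)=0\Rightarrow f(x)\ge0$ and $\int_{\gamma'}f\,ds\ge0$ for every unit curve $\gamma'$. Since $\int f\,d(c\,\tilde\gamma)\ge0$ is equivalent to $\int_\gamma f\,ds\ge0$, it suffices to prove $\int_\gamma f\,ds\ge0$ for each $\gamma$ underlying a measure in $\mathbf{F}_0$ and then invoke $(B_1)(c)$ for $\mathbf{F}_0$ to conclude $\int f\,dm\ge0$. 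The natural idea is to slice $\gamma$ along its $\varphi$-arc-length function $s\mapsto\int_{\gamma|_{[0,s]}}\varphi\,ds$, which is continuous because $\int_\gamma\varphi\,ds<\infty$, into consecutive sub-curves of $\varphi$-length $1$; each slice is a unit curve, so when $L:=\int_\gamma\varphi\,ds$ is an integer one gets $\int_\gamma f\,ds=\sum_j\int_{\gamma'_j}f\,ds\ge0$ at once.

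I expect the main obstacle to be the non-integer remainder $L-\lfloor L\rfloor\in(0,1)$: a leftover sub-curve of $\varphi$-length less than $1$ is not a unit curve, and a sliding-window example shows that positivity on unit curves alone does \emph{not} force positivity on a curve of non-integer $\varphi$-length. My plan is to kill the remainder by concatenation. Replacing $\gamma$ by its $N$-fold concatenation $\gamma^{\sqcup N}$, of $\varphi$-length $NL$ and with $\int_{\gamma^{\sqcup N}}f\,ds=N\int_\gamma f\,ds$, and slicing off the initial $\lfloor NL\rfloor$ unit pieces, gives $N\int_\gamma f\,ds\ge\int_{\rho_N}f\,ds$, where $\rho_N$ is a terminal piece of $\varphi$-length $NL-\lfloor NL\rfloor<1$ contained in the terminal unit piece $\tau$ of a single copy of $\gamma$. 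Hence $\int_{\rho_N}f\,ds\ge-\int_\tau|f|\,ds$ independently of $N$, and dividing by $N$ and letting $N\to\infty$ forces $\int_\gamma f\,ds\ge0$, as required. The delicate point, and the technical heart of the argument, is the uniform integrability needed to bound the remainder: this is where I would use $f\in L^1$ together with the sign hypothesis $\varphi=0\Rightarrow f\ge0$, which makes the $\varphi$-null portions of $\rho_N$ contribute non-negatively and reduces the estimate to controlling $f^-$ on $\{\varphi>0\}\cap\tau$. The $L^p$ statement, Corollary \ref{Lpcurve}, then follows along identical lines with $dm$ replaced by $\varphi^{p-1}\,dm$ in (c) and the sign hypothesis dropped.
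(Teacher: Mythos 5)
Your ``if'' direction, your choice of $\Gamma'$ (all curves of unit $\varphi$-length), and your verification of (a) and (b) match the paper exactly. But for the heart of the matter --- condition (c) in the ``only if'' direction --- you take a genuinely different route. The paper never attempts to prove $\int_\gamma f\,ds\geq 0$ for slack curves. Instead it re-runs the variational argument of Theorem \ref{L1Beurling} inside the curve setting: it forms $\varphi_\varepsilon=(\varphi+\varepsilon f)^+$, extracts from each $\gamma$ with $1\leq\int_\gamma\varphi\,ds<\infty$ a \emph{single} unit subcurve $\gamma_1$ (by continuity of the $\varphi$-arc length along the pieces of $\gamma$), and uses the non-negativity of $\varphi_\varepsilon$ to get $\int_\gamma\varphi_\varepsilon\,ds\geq\int_{\gamma_1}\varphi_\varepsilon\,ds\geq 1$; admissibility of $\varphi_\varepsilon$ for $\Gamma_0$ then feeds into the same $P_\varepsilon$/dominated-convergence limit as in section \ref{l1sect}. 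You instead use Theorem \ref{L1Beurling} as a black box (via Remark 2, so that each $\nu\in\mathbf{F}_0$ is $c\tilde\gamma$) and prove a transfer lemma: positivity of $\int f\,ds$ on unit curves forces positivity on every curve of finite $\varphi$-length at least one, via $N$-fold concatenation, slicing, and letting $N\to\infty$. This is legitimate: multiply-traversed curves are allowed in the paper's definition (the paper uses them in Example \ref{unique} and in the proof of Theorem \ref{curvemetrics}), and your slicing of $\gamma^{\sqcup N}$ is the paper's subcurve construction iterated. Your route buys modularity (the variational argument is never repeated); the paper's buys economy (no concatenation, no limit in $N$, no need to make sense of $\int_\gamma f\,ds$ on slack curves at all) and avoids relying on Remark 2, which the paper asserts but never explicitly proves.

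One step of yours needs a different justification than the one you propose. The bound $\int_{\rho_N}f\,ds\geq-\int_\tau|f|\,ds$ is useful only if the right-hand side is finite, and neither $f\in L^1(\RR^n)$ nor the sign hypothesis gives that: a Lebesgue-integrable $f$ can have $\int_\tau|f|\,ds=\infty$ because the trace of $\tau$ is Lebesgue-null, and the sign hypothesis controls $f$ only on $\{\varphi=0\}$, whereas $f$ may be arbitrarily negative on $\{\varphi>0\}\cap\tau$. The correct source of finiteness is the standing hypothesis applied to $\tau$ itself: $\tau$ is a unit curve, so $\int_\tau f\,ds\geq 0$, and for this line integral to be well defined and non-negative (the same convention the paper needs when it writes $\int_\gamma(\varphi+\varepsilon f)\,ds\geq\int_\gamma\varphi\,ds$) one must have $\int_\tau f^-\,ds<\infty$. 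So replace $|f|$ by $f^-$, i.e.
\begin{equation*}
\int_{\rho_N}f\,ds\ \geq\ -\int_{\rho_N}f^-\,ds\ \geq\ -\int_\tau f^-\,ds\ >\ -\infty ,
\end{equation*}
and your limit argument closes. With that repair, and with a consistent choice of cut points (say first-hitting times of the cumulative $\varphi$-arc length, so that each slice has $\varphi$-length exactly one and $\rho_N\subset\tau$), your proof is correct, as is your claim that the $L^p$ case runs along identical lines.
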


\begin{remark}\label{subcurve} In Corollaries \ref{Lpcurve} and \ref{L1curve}, if $\varphi$ is extremal for $\Mod_p\Gamma$, then there exists $\Gamma'$ satisfying $(B'_p)$ such that every curve $\gamma'\in\Gamma'$ is a \emph{subcurve} of some curve $\gamma\in\Gamma$. \end{remark}

\begin{remark} In Corollaries \ref{LpBeurling} and \ref{L1Beurling} the case $\Gamma'=\emptyset$ is allowed. Note condition $(B'_p)$ holds with $\Gamma'=\emptyset$ if and only if $\varphi=0$ Lebesgue almost everywhere.\end{remark}

The auxiliary curve family $\Gamma'$ that is required to test condition $(B'_p)$ is not unique. In the next example, we exhibit disjoint curve families $\Gamma_0$ and $\Gamma_1$ such that condition $(B_2')$ holds with the auxiliary curve family $\Gamma'=\Gamma_i$, $i=0,1$.

\begin{example}\label{unique} Let $R$ be a rectangle with side lengths $a\leq b$, and let $\Gamma$ and $\Gamma_0$ be the curve families from Example \ref{rect1}. Above we showed that condition $(B_2')$ (i.e.~ Beurling's criterion) holds for $\Gamma$ and $\varphi=\frac{1}{a}\chi_R$ using the auxiliary curve family $\Gamma'=\Gamma_0\subset\Gamma$. Thus, $\Mod_2\Gamma=\int_R(1/a)^2=b/a$ by Corollary \ref{Lpcurve}. Alternatively let $\Gamma_1$ be the curve family described as follows (see Figure 2). For each $\gamma(t)\in\Gamma_0$, there correspond exactly two curves $\gamma'(t)$ and $\gamma''(t)$ in $\Gamma_1$. If the curve $\gamma(t)=[P,Q]$, then the curves $\gamma'(t)$ and $\gamma''(t)$ are given by $$\gamma'(t)=\left[P,\frac{P+Q}{2}\right]\sqcup\left[\frac{P+Q}{2},P\right]\quad\text{ and }\quad \gamma''(t)=\left[Q,\frac{P+Q}{2}\right]\sqcup\left[\frac{P+Q}{2},Q\right]$$
where $\sqcup$ denotes concatenation. \begin{figure}\includegraphics[width=.7\textwidth]{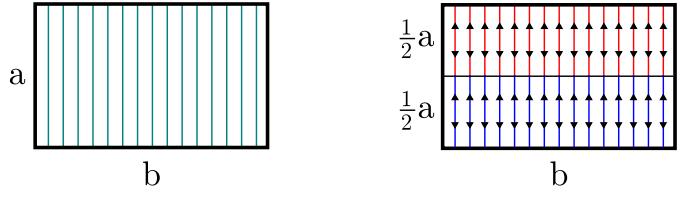}\caption{Curve families $\Gamma_0$ and $\Gamma_1$ in Example 6}\end{figure} In other words, each curve in $\Gamma_1$ travels along a straight path starting at and perpendicular to an edge of side length $b$; half-way across to the other side, the curve reverses direction and returns to its starting point. We now check that $(B_2')$ holds for $\Gamma$ and $\varphi$ with $\Gamma'=\Gamma_1$. A quick computation shows that $\int_\gamma \varphi\,ds=1$ for all $\gamma\in \Gamma_1$. Hence condition (b) holds. For (a), we have $\Mod_2\Gamma \leq \Mod_2\Gamma\cup\Gamma_1\leq \int_{\RR^2}\varphi^2=\Mod_2\Gamma$, since $\varphi$ is admissible for $\Gamma\cup\Gamma_1$ and since (we already know that) $\varphi$ is extremal for $\Gamma$. It remains to check (c). If $f\in L^2(\RR^2)$ and $\int_{\gamma} f\,ds\geq 0$ for all $\gamma\in \Gamma_1$, then \begin{equation*}\begin{split}2\int_{\RR^2} f\varphi &= \frac{2}{a}\int_R f = \frac{2}{a}\int_0^b\int_{\gamma(t)}f\,ds\,dt=\frac{1}{a}\int_0^b\int_{\gamma(t)\sqcup\gamma(t)}f\,ds\,dt\\
&=\frac{1}{a}\int_0^b\int_{\gamma'(t)\sqcup\gamma''(t)}f\,ds\,dt =\frac{1}{a}\int_0^b\int_{\gamma'(t)}f\,ds\,dt + \frac{1}{a}\int_0^b\int_{\gamma''(t)}f\,ds\,dt\geq 0.\end{split}\end{equation*} Thus, condition (c) holds too, and we have reached the end of the example.\end{example}

It is of course possible to specialize Theorems \ref{LpBeurling} and \ref{L1Beurling} to other settings. For instance, a reader familiar with analysis on metric spaces will have no difficulty adapting Corollaries \ref{Lpcurve} and \ref{L1curve} to the metric space setting. In \cite[\S11]{B}, Bonk notes that Beurling's criterion can be adapted to produce a sufficient test for extremal metrics for the transboundary modulus of a curve family in the Riemann sphere. Using Theorem \ref{LpBeurling} and the proof of Corollary \ref{Lpcurve}, one can also formulate a necessary and sufficient test for extremal metrics for transboundary modulus.

So far we have found a characterization of extremal metrics for the $p$-modulus of a measure system or curve family when $1\leq p<\infty$. A related problem is to identify those metrics which are extremal for the $p$-modulus of some measure system or curve family. The next result gives a solution to this problem for measure systems.

\begin{theorem}\label{xmetrics} If $\varphi:X\rightarrow[0,\infty]$ is a metric and $\varphi<\infty$ $m$-a.e., then $\varphi$ is extremal for the $p$-modulus of $\mathbf{E}_\varphi=\{\mu \text{ defined on }\mathcal{M}:\int\varphi\,d\mu\geq 1\}$ for all $0<p<\infty$.\end{theorem}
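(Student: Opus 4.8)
The plan is to prove the theorem directly, without appealing to Theorems \ref{LpBeurling} or \ref{L1Beurling}, by establishing a pointwise domination principle that works uniformly for all $0<p<\infty$. First I would record the trivial upper bound: since every $\mu\in\mathbf{E}_\varphi$ satisfies $\int\varphi\,d\mu\geq 1$ by the very definition of $\mathbf{E}_\varphi$, the metric $\varphi$ is itself admissible, and hence $\Mod_p\mathbf{E}_\varphi\leq\int\varphi^p\,dm$. All the content is therefore in the matching lower bound, for which it suffices to show that \emph{every} admissible metric $\psi$ for $\mathbf{E}_\varphi$ satisfies $\psi\geq\varphi$ at $m$-almost every point; monotonicity of $t\mapsto t^p$ then gives $\int\psi^p\,dm\geq\int\varphi^p\,dm$ for each $p>0$ simultaneously, and taking the infimum over admissible $\psi$ yields $\Mod_p\mathbf{E}_\varphi\geq\int\varphi^p\,dm$.

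The key step is to construct enough test measures inside $\mathbf{E}_\varphi$ to force the pointwise inequality. The crucial observation is that $\mathbf{E}_\varphi$ consists of \emph{all} measures on $\mathcal{M}$ meeting the normalization, so in particular it contains point masses: for each $x\in X$ let $\delta_x$ be the measure with $\delta_x(S)=1$ if $x\in S$ and $\delta_x(S)=0$ otherwise, so that $\int f\,d\delta_x=f(x)$ for every Borel $f$. Fix $x$ with $0<\varphi(x)<\infty$ and set $\mu_x=\varphi(x)^{-1}\delta_x$. Then $\int\varphi\,d\mu_x=\varphi(x)^{-1}\varphi(x)=1$, so $\mu_x\in\mathbf{E}_\varphi$, and admissibility of $\psi$ gives $\varphi(x)^{-1}\psi(x)=\int\psi\,d\mu_x\geq 1$, that is, $\psi(x)\geq\varphi(x)$. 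At points where $\varphi(x)=0$ the inequality $\psi(x)\geq 0=\varphi(x)$ is automatic. Hence $\psi\geq\varphi$ holds at every point of $\{\varphi<\infty\}$, which by hypothesis is all of $X$ up to an $m$-null set, completing the lower bound.

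I expect the main conceptual obstacle to be precisely the choice of test measures. A natural first attempt uses measures absolutely continuous with respect to $m$, such as $\mu_S=(\int_S\varphi\,dm)^{-1}(m\res S)$; this only yields the averaged inequalities $\int_S\psi\,dm\geq\int_S\varphi\,dm$, and upgrading these to a pointwise (or $m$-a.e.) statement runs into genuine measure-theoretic pathologies when $(X,\mathcal{M},m)$ is not semifinite, since a set of positive measure need not contain a subset of finite positive measure. Passing instead to the point masses $\delta_x$---which are legitimate measures on an arbitrary $\sigma$-algebra $\mathcal{M}$ regardless of whether singletons are measurable---sidesteps this difficulty entirely and localizes the inequality to a single point. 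Finally I would comment on the role of the hypothesis $\varphi<\infty$ $m$-a.e.: on $\{\varphi=\infty\}$ the measure $\delta_x$ only forces $\psi(x)\geq 1$ rather than $\psi(x)=\infty$, so without this hypothesis an admissible $\psi$ could fail to dominate $\varphi$ on a set of positive measure and the conclusion $\int\psi^p\,dm\geq\int\varphi^p\,dm$ could break down; requiring $\{\varphi=\infty\}$ to be $m$-null is exactly what makes the argument go through, including the case $\int\varphi^p\,dm=\infty$, where the same inequality shows $\Mod_p\mathbf{E}_\varphi=\infty$ and hence that $\varphi$ is extremal there as well.
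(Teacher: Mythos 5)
Your proof is correct and follows the same strategy as the paper: both arguments exploit the fact that $\mathbf{E}_\varphi$ contains \emph{all} normalized measures, single out scaled point-like masses, and use admissibility against them to force the pointwise domination $\psi\geq\varphi$ $m$-a.e., which gives $\int\psi^p\,dm\geq\int\varphi^p\,dm$ for every $0<p<\infty$ simultaneously. The one genuine difference is the choice of test measures: the paper uses the scaled atomic measures $\varphi(A)^{-1}\delta_A$ attached to atoms $A$ of $\mathcal{M}$ (on which any measurable function is constant), while you use scaled point masses $\varphi(x)^{-1}\delta_x$ at arbitrary points $x\in X$. When $x$ lies in an atom $A$, the two coincide as measures on $\mathcal{M}$ (for $S\in\mathcal{M}$ one has $x\in S$ if and only if $A\subset S$), so in the settings motivating the paper ($\RR^n$ with the Borel $\sigma$-algebra, where the atoms are singletons, or the transboundary space) the two proofs are literally the same. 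Your variant is, however, slightly more robust: the paper's step ``$\psi\geq\varphi$ on all atoms where $\varphi<\infty$, hence $\psi\geq\varphi$ $m$-a.e.'' tacitly assumes that $m$-almost every point of $X$ lies in an atom of $\mathcal{M}$, which can fail for exotic $\sigma$-algebras (e.g.\ the product $\sigma$-algebra on an uncountable product has no atoms at all), whereas the point mass $\delta_x$ is a legitimate measure on any $\mathcal{M}$ and satisfies $\int f\,d\delta_x=f(x)$ for every measurable $f$, whether or not $\{x\}\in\mathcal{M}$. So your argument establishes the theorem for arbitrary measure spaces without that implicit hypothesis; your closing remarks about $\{\varphi=\infty\}$ (where $\delta_x$ only forces $\psi(x)\geq 1$) also correctly identify why the hypothesis $\varphi<\infty$ $m$-a.e.\ is what makes the argument close.
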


\begin{proof} Let $\mathcal{A}$ be the collection of atoms in $\mathcal{M}$. Note that the scaled atomic measure $\mu_A=\varphi(A)^{-1}\delta_A\in\mathbf{E}_\varphi$ for all $A\in \mathcal{A}$ such that $0<\varphi(A)<\infty$. Let $\psi$ be an admissible metric for $\mathbf{E}_\varphi$. Then $\psi(A)/\varphi(A)=\int \psi\, d\mu_A\geq 1$ when $0<\varphi(A)<\infty$. Also, $\psi(A)\geq \varphi(A)$ when $\varphi(A)=0$. Thus, if $\varphi<\infty$ $m$-a.e., then $\psi\geq\varphi$ $m$-a.e., and $\int \psi^p\,dm\geq \int \varphi^p\,dm$ for all $0<p<\infty$. Therefore, $\Mod_p\mathbf{E}_\varphi=\int \varphi^p\,dm$ for all $0<p<\infty$.\end{proof}

We can establish a similar result for curve families in $\RR^n$. The basic philosophy, suggested by the proof of Theorem \ref{xmetrics}, is that one needs to approximate the measures $\delta_x$ at points  where $\varphi(x)>0$ by sequences of curves. See section \ref{sect4} for details.

\begin{theorem}\label{curvemetrics} If $\varphi:\RR^n\rightarrow[0,\infty]$ is Borel, then $\varphi$ is extremal for the $p$-modulus of $\Gamma_{\varphi}=\{\text{curve }\gamma\text{ in }\RR^n:\int_\gamma \varphi\,ds\geq 1\}$ for all $1\leq p<\infty$ such that $0<\int_{\RR^n} \varphi^p<\infty$.\end{theorem}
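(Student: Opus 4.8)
The plan is to verify the two inequalities that together say $\Mod_p\Gamma_\varphi=\int_{\RR^n}\varphi^p\,dm_n$. The upper bound is immediate: by definition every $\gamma\in\Gamma_\varphi$ satisfies $\int_\gamma\varphi\,ds\geq1$, so $\varphi$ is admissible for $\Gamma_\varphi$ and hence $\Mod_p\Gamma_\varphi\leq\int\varphi^p\,dm_n$. For the matching lower bound I would take an arbitrary admissible metric $\psi$ and show $\int\psi^p\geq\int\varphi^p$. Here I may assume $\psi\in L^p(\RR^n)$ (otherwise $\int\psi^p=\infty$ and there is nothing to prove), and I will in fact establish the stronger pointwise statement $\psi\geq\varphi$ Lebesgue-a.e., which gives $\int\psi^p\geq\int\varphi^p$ at once.

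The engine is a curve-family analogue of the atomic argument in the proof of Theorem \ref{xmetrics}, where $\psi$ was tested against the scaled atoms $\varphi(A)^{-1}\delta_A$. The atoms of $\mathcal B_n$ are singletons, which are $H^1$-null, so a curve cannot concentrate on a point; instead I approximate this behaviour with multiply-covered short segments. Fix the coordinate direction $e_1$ and write $x=(s,y)$ with $s\in\RR$ and $y\in\RR^{n-1}$. For the centred segment $S_t=\{(s,y):|s-s_0|\leq t/2\}$ put $c_t=\int_{S_t}\varphi\,dH^1$ and $d_t=\int_{S_t}\psi\,dH^1$. Assuming $c_t>0$, let $\gamma_t$ be the curve made of $k_t:=\lceil 1/c_t\rceil$ alternating passes over $S_t$; as in Example \ref{unique} overlapping pieces are counted with multiplicity, so $\tilde\gamma_t=k_t\,(H^1\res S_t)$ and $\int_{\gamma_t}\varphi\,ds=k_tc_t\geq1$, whence $\gamma_t\in\Gamma_\varphi$. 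Admissibility of $\psi$ then forces $k_td_t=\int_{\gamma_t}\psi\,ds\geq1$.

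To convert these segment estimates into a pointwise inequality I would use one-dimensional Lebesgue differentiation along almost every line. Because $1\leq p<\infty$ we have $\varphi,\psi\in L^p(\RR^n)\subseteq L^1_{\mathrm{loc}}(\RR^n)$, so by Fubini the restrictions $s\mapsto\varphi(s,y)$ and $s\mapsto\psi(s,y)$ lie in $L^1_{\mathrm{loc}}(\RR)$ for a.e.\ $y$, and for such $y$ almost every $s_0$ is a common Lebesgue point, giving $c_t/t\to\varphi(s_0,y)=:v$ and $d_t/t\to\psi(s_0,y)=:w$. When $v>0$ we have $c_t>0$ for small $t$ and $k_tc_t\in[1,1+c_t)\to1$, so $d_t/c_t\geq 1/(k_tc_t)\to1$; combined with $d_t/c_t\to w/v$ this yields $w\geq v$. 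When $v=0$ the inequality $w\geq v$ is trivial. Hence $\psi\geq\varphi$ at a.e.\ point of a.e.\ line, and a second application of Fubini upgrades this to $\psi\geq\varphi$ $m_n$-a.e., completing the lower bound.

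The main obstacle — and the one place where the hypothesis $p\geq1$ is genuinely used — is this passage from the family of curve constraints to a pointwise comparison of $\psi$ with $\varphi$. Two points need care. First, the integer multiplicity $k_t$: one must check the asymptotic $k_tc_t\to1$ so that no multiplicative constant is lost and the comparison is sharp. Second, and more essentially, the differentiation step requires $\psi$ to be locally integrable along a.e.\ line, which holds for $p\geq1$ through the embedding $L^p\subseteq L^1_{\mathrm{loc}}$ but fails for $0<p<1$; there an admissible metric can concentrate so as to make $\int_\gamma\psi$ large while $\int\psi^p$ stays small, exactly the phenomenon that makes $\varphi$ non-extremal in that range.
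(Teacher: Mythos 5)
Your proof is correct, and it rests on the same geometric engine as the paper's: multiply-covered short segments parallel to a fixed coordinate direction, Fubini, and one-dimensional Lebesgue differentiation along almost every line. But the logical packaging is genuinely different. The paper concludes extremality by verifying its own characterization $(B_p')$ from Corollaries \ref{Lpcurve} and \ref{L1curve}; since condition (b) there demands $\int_\gamma\varphi\,ds=1$ \emph{exactly}, the paper needs a small intermediate-value lemma (Lemma \ref{ilemma}, continuity of $r\mapsto\int_{-r}^{r}\varphi(x+te_1)\,dt$) to produce intervals with $\int_{I_k}\varphi\,dt=1/n_k$ precisely, which it then covers exactly $n_k$ times, and it finishes by showing that any test function $f$ with $\int_\gamma f\,ds\geq 0$ over these curves satisfies $f\geq 0$ a.e.\ on $\{\varphi>0\}$. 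You bypass that machinery entirely: because you test an arbitrary admissible $\psi$ directly against curves in $\Gamma_\varphi$, you only need $\int_{\gamma_t}\varphi\,ds\geq 1$, so the ceiling $k_t=\lceil 1/c_t\rceil$ replaces the exact-hitting lemma, at the price of the (correctly verified) asymptotic $k_tc_t\to 1$, and differentiation then yields the pointwise bound $\psi\geq\varphi$ a.e. This is a stronger conclusion than extremality: it is the exact curve-family analogue of the paper's Theorem \ref{xmetrics}, as you note, it gives extremality for all exponents $1\leq p<\infty$ simultaneously from a single argument, and it yields a.e.\ uniqueness of the extremal metric for free. It is also more elementary, making no use of Theorems \ref{LpBeurling} and \ref{L1Beurling}. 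What the paper's route buys is an illustration of its main theorems (the point of that section) and an explicit auxiliary family $\Gamma'$ witnessing $(B_p')$, which your argument does not produce. Both proofs use $p\geq 1$ in the same place, namely the embedding $L^p\subset L^1_{\rm loc}$ along a.e.\ line that makes the differentiation step legitimate.
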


The plan for the remainder of the note is as follows. In the next two sections, we prove the characterizations of extremal metrics for the $p$-modulus of a measure system from above, in the cases $1<p<\infty$ (section \ref{lpsect}) and $p=1$ (section \ref{l1sect}). Then we turn our attention to extremal metrics for families of curves in $\RR^n$. In section \ref{modify}, we show how the proofs of Theorems \ref{LpBeurling} and \ref{L1Beurling} must be modified to obtain Corollaries \ref{Lpcurve} and \ref{L1curve}. Finally, we give the proof of Theorem \ref{curvemetrics} in section \ref{sect4}.

\section{Proof of Theorem \ref{LpBeurling} (Extremal Metrics in $L^p$)}\label{lpsect}

Let $1<p<\infty$. Let $\mathbf{E}$ be a measure system and let $\varphi$ be an admissible metric for $\mathbf{E}$ such that $\varphi\in L^p(m)$. If $\varphi=0$ $m$-a.e., then $\varphi$ is extremal for the $p$-modulus of $\mathbf{E}$ and condition $(B_p)$ holds with $\mathbf{F}=\emptyset$. Thus, we assume that $0<\int \varphi^p\,dm<\infty$.

We shall start with the proof that condition $(B_p)$ implies that $\varphi$ is extremal, by mimicking the proof of Beurling's criterion in Ahlfors \cite{A}. Suppose that $(B_p)$ holds for some measure system $\mathbf{F}$ satisfying (a), (b) and (c). Since the metric $\varphi$ is admissible for $\mathbf{E}$, $\varphi$ is also admissible for $\mathbf{E}\cup\mathbf{F}$, by (b). Let $\psi$ be a competing admissible metric for $\Mod_p\mathbf{E}\cup\mathbf{F}$, so that $\int \psi^p\,dm \leq \int \varphi^p\,dm<\infty$. Then $\int \psi\,d\nu \geq 1=\int \varphi\, d\nu$ for all $\nu\in\mathbf{F}$, by (b). Hence $f=\psi-\varphi\in L^p(m)$ and $\int f\,d\nu\geq 0$ for all $\nu\in\mathbf{F}$. By (c), we conclude that $\int(\psi-\varphi)\varphi^{p-1}\geq 0$. Then \begin{equation}\label{holder}\int \varphi^p\, dm\leq \int \psi\varphi^{p-1}\, dm\leq \left(\int \psi^p\,dm\right)^{1/p}\left(\int \varphi^p\,dm\right)^{(p-1)/p}\end{equation} where the second inequality is H\"older's inequality. Since $0<\int \varphi^p\,dm<\infty$, we get that $\int\varphi^p\,dm\leq\int\psi^p\,dm.$ Thus, $\varphi$ is extremal for the $p$-modulus of $\mathbf{E}\cup\mathbf{F}$. Finally, $\Mod_p\mathbf{E}\leq\int\varphi^p\,dm=\Mod_p\mathbf{E}\cup\mathbf{F}=\Mod_p\mathbf{E}$, by (a). Therefore, $\varphi$ is extremal for the $p$-modulus of $\mathbf{E}$.

For the reverse direction, we require a short lemma.

\begin{lemma}\label{plemma} Let $1<p<\infty$. If $\varphi, f\in L^p(m)$ take values in $[-\infty,\infty]$ and $\varphi\geq 0$, then $$ \int \left[(\varphi+\varepsilon f)^+\right]^p\,dm= \int_{\{\varphi+\varepsilon f>0\}} \left[\varphi^p+ p\,\varepsilon f\varphi^{p-1}\right]\,dm+o(\varepsilon)\cdot\varepsilon $$ where $o(\varepsilon)\rightarrow 0$ as $\varepsilon\rightarrow 0$.\end{lemma}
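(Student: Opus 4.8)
The plan is to recognize the integrand on the right as the first-order Taylor remainder of the convex function $u\mapsto(u^+)^p$ and then to estimate it by dominated convergence. First I would record the elementary reduction coming from the fact that $(u^+)^p$ vanishes for $u\le 0$: we have $\int[(\varphi+\varepsilon f)^+]^p\,dm=\int_{\{\varphi+\varepsilon f>0\}}(\varphi+\varepsilon f)^p\,dm$. Writing $\Phi_\varepsilon=[(\varphi+\varepsilon f)^+]^p-\varphi^p-p\varepsilon f\varphi^{p-1}$ and noting that $(\varphi+\varepsilon f)^p=[(\varphi+\varepsilon f)^+]^p$ on the set $\{\varphi+\varepsilon f>0\}$, the difference between the two sides of the claimed identity is exactly $\int_{\{\varphi+\varepsilon f>0\}}\Phi_\varepsilon\,dm$, so the whole task reduces to showing that this is $o(\varepsilon)\cdot\varepsilon$.

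The key structural observation is that $\phi(u)=(u^+)^p$ is convex and $C^1$ for $p>1$, with $\phi'(u)=p(u^+)^{p-1}$; since $\varphi\ge 0$ we have $\phi'(\varphi)=p\varphi^{p-1}$, so the supporting-line inequality $\phi(\varphi+\varepsilon f)\ge\phi(\varphi)+\phi'(\varphi)\varepsilon f$ gives $\Phi_\varepsilon\ge 0$ a.e. Hence $0\le\int_{\{\varphi+\varepsilon f>0\}}\Phi_\varepsilon\,dm\le\int_X\Phi_\varepsilon\,dm$, and it suffices to prove the cleaner (and symmetric in the region) estimate $\int_X\Phi_\varepsilon\,dm=o(|\varepsilon|)$; the stated error $o(\varepsilon)\cdot\varepsilon$ then follows because $\int_{\{\varphi+\varepsilon f>0\}}\Phi_\varepsilon\,dm$ is squeezed between $0$ and $\int_X\Phi_\varepsilon\,dm$, so its ratio with $\varepsilon$ tends to $0$.

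To show $\int_X\Phi_\varepsilon\,dm/|\varepsilon|\to 0$ I would apply dominated convergence to $\Phi_\varepsilon/|\varepsilon|$. Pointwise a.e.\ (where $\varphi<\infty$ and $|f|<\infty$, which holds since $\varphi,f\in L^p$) one has $\Phi_\varepsilon/|\varepsilon|\to 0$: at points with $\varphi>0$ this is the second-order Taylor remainder of $u\mapsto u^p$ about the positive number $\varphi$, hence $O(\varepsilon^2)$; at points with $\varphi=0$ one has $\Phi_\varepsilon=[(\varepsilon f)^+]^p\le|\varepsilon|^p|f|^p$, so $\Phi_\varepsilon/|\varepsilon|\le|\varepsilon|^{p-1}|f|^p\to 0$ because $p>1$. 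For the domination, the mean value theorem applied to $u\mapsto(u^+)^p$ yields $\big|[(\varphi+\varepsilon f)^+]^p-\varphi^p\big|\le p|\varepsilon||f|(\varphi+|f|)^{p-1}$ for $|\varepsilon|\le 1$, while the linear term obeys $p|\varepsilon||f|\varphi^{p-1}\le p|\varepsilon||f|(\varphi+|f|)^{p-1}$; combining, $\Phi_\varepsilon/|\varepsilon|\le 2p|f|(\varphi+|f|)^{p-1}=:g$ for all $0<|\varepsilon|\le 1$.

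The main obstacle, and the one point that genuinely uses the hypotheses, is verifying $g\in L^1(m)$ so that dominated convergence applies. I would check this by H\"older's inequality with conjugate exponents $p$ and $p'=p/(p-1)$: since $\varphi+|f|\in L^p(m)$ we have $(\varphi+|f|)^{p-1}\in L^{p'}(m)$, whence $\int|f|(\varphi+|f|)^{p-1}\,dm\le\|f\|_p\,\|(\varphi+|f|)^{p-1}\|_{p'}<\infty$. Granting $g\in L^1$, dominated convergence gives $\int_X\Phi_\varepsilon\,dm=o(|\varepsilon|)$, and the reduction above finishes the proof. I would flag only one subtlety: the pointwise Taylor estimate degenerates as $\varphi\downarrow 0$ when $1<p<2$, but this never enters the argument, since the single dominating function $g$ handles the regimes $1<p<2$ and $p\ge 2$ uniformly.
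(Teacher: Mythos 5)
Your proof is correct, and while it shares the same analytic core as the paper's argument (linearize $u\mapsto (u^+)^p$ and kill the remainder with dominated convergence), it is organized around a device the paper does not use. The paper applies the mean value theorem directly on the $\varepsilon$-dependent set $P_\varepsilon=\{\varphi+\varepsilon f>0\}$, writing $(\varphi+\varepsilon f)^p-\varphi^p=p(\varphi+\delta f)^{p-1}f\,\varepsilon$ with a measurable intermediate point $\delta(x)$ between $0$ and $\varepsilon$, so that the error becomes $\varepsilon\int_{P_\varepsilon}pf\left[(\varphi+\delta f)^{p-1}-\varphi^{p-1}\right]dm$, which is then sent to $0$ by dominated convergence (the dominating function is left implicit there, and the proof must assert measurability of $\delta$). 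You instead exploit convexity: the supporting-line inequality for $\phi(u)=(u^+)^p$ gives $\Phi_\varepsilon\geq 0$ a.e., which lets you replace the integral over the moving set $P_\varepsilon$ by the integral over all of $X$, and then you prove $\int_X\Phi_\varepsilon\,dm=o(|\varepsilon|)$ by pointwise Taylor estimates (split into $\varphi>0$ and $\varphi=0$, the latter using $p>1$) together with an explicit dominating function $2p|f|(\varphi+|f|)^{p-1}$ whose integrability you verify by H\"older. What your route buys: no measurable selection of the mean-value point is needed, the $\varepsilon$-dependence of the domain disappears from the limit argument, and the domination step — which the paper glosses over — is made fully explicit. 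What the paper's route buys: it is shorter, since the mean value theorem packages the remainder in one stroke and a single application of dominated convergence ends the proof. Both arguments are complete and correct.
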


\begin{proof} Let $1<p<\infty$ and let $\varphi,f\in L^p(m)$ be given. Assume that the functions $\varphi$ and $f$ take values in $[-\infty,\infty]$ and $\varphi\geq 0$. Fix $\varepsilon\neq 0$ and set $P=\{\varphi+\varepsilon f>0\}$. By the mean value theorem, for all $x\in P$ such that  $\varphi(x)$ and $f(x)$ are both finite, there exists $\delta=\delta(x)$ between $0$ and $\varepsilon$ such that $(\varphi+\varepsilon f)^p -\varphi^{p} =\left[p(\varphi+\delta f)^{p-1}f\right]\varepsilon$. In particular, this holds at $m$-a.e.~ $x\in P$, because $\varphi,f\in L^p(m)$, and the function $\delta:P\rightarrow\RR$ is measurable, because $\varphi$ and $f$ are measurable. Hence $$\int_P (\varphi+\varepsilon f)^p\,dm= \int_P \left[\varphi^p +  p\,\varepsilon f\varphi^{p-1}\right]\,dm+\varepsilon\int_P pf\left[(\varphi+\delta f)^{p-1}-\varphi^{p-1}\right]\,dm.$$ The lemma follows, because the second integral in the displayed equation vanishes as $\varepsilon\rightarrow 0$ by the dominated convergence theorem.
\end{proof}

Now suppose that $\varphi$ is extremal for the $p$-modulus of $\mathbf{E}$. Break $\mathbf{E}=\mathbf{E}_0\cup\mathbf{E}_\infty$ into a union of two measure systems where $\mathbf{E}_0=\{\mu\in\mathbf{E}:1\leq \int\varphi\,d\mu<\infty\}$ and $\mathbf{E}_\infty=\{\mu\in\mathbf{E}:\int\varphi\,d\mu=\infty\}$. Since $\varphi\in L^p(m)$, we have $\Mod_p\mathbf{E}_\infty=0$, because $\varepsilon \varphi$ is admissible for $\mathbf{E}_\infty$ for all $\varepsilon>0$. It follows that $\Mod_p\mathbf{E}_0=\Mod_p\mathbf{E}=\int\varphi^p\,dm$; that is, $\varphi$ is extremal for the $p$-modulus of $\mathbf{E}_0$, as well. Moreover, $\mathbf{E}_0$ is nonempty, since $\Mod_p\mathbf{E}_0>0$. Recall that we want to show that condition $(B_p)$ holds. Assign $\mathbf{F}$ to be the family of all measures $\nu$ defined on $\mathcal{M}$ such that $\int \varphi\,d\nu=1$. Thus, (b) is satisfied by the definition of $\mathbf{F}$. To verify (a), simply note that $$\Mod_p\mathbf{E}\leq \Mod_p \mathbf{E}\cup\mathbf{F}\leq \int \varphi^p\, dm= \Mod_p\mathbf{E},$$ since $\varphi$ is admissible for $\mathbf{E}\cup\mathbf{F}$ and $\varphi$ is extremal for the $p$-modulus of $\mathbf{E}$. It remains to establish (c).  Assume that $f\in L^p(m)$ takes values in $[-\infty,\infty]$  and $\int f\, d\nu\geq 0$ for every $\nu\in\mathbf{F}$. Then for all
$\varepsilon>0$ the metric $\varphi_\varepsilon=(\varphi+\varepsilon f)^+\geq 0$ belongs to $L^p(m)$ and $\int \varphi_\varepsilon\, d\nu\geq \int (\varphi+\varepsilon f)\, d\nu\geq \int \varphi\, d\nu=1$ for every $\nu\in\mathbf{F}$. If $\mu\in\mathbf{E}_0$, then there exists $0<c\leq 1$ such that $c\mu\in\mathbf{F}$ so that $\int \varphi_\varepsilon\,d\mu\geq c\int\varphi_\varepsilon\,d\mu=\int\varphi_{\varepsilon}\,d(c\mu) \geq 1.$ Hence the metric $\varphi_\varepsilon$ is also admissible for $\mathbf{E}_0$. Thus, $$\int \varphi^p\,dm=\Mod_p\mathbf{E}_0\leq \int \varphi^p_{\varepsilon}\,dm=\int \left[(\varphi+\varepsilon f)^+\right]^p\,dm.$$ Then, Lemma \ref{plemma} gives $\int \varphi^p\,dm\leq \int_{P_\varepsilon} \left[\varphi^p+ p\,\varepsilon f\varphi^{p-1}\right]\,dm+o(\varepsilon)\cdot\varepsilon$,  where the set $P_\varepsilon=\{\varphi+\varepsilon f>0\}$ and $o(\varepsilon)\rightarrow 0$ as $\varepsilon\rightarrow 0$. It follows that $$p\,\varepsilon\int_{P_\varepsilon} f\varphi^{p-1}\,dm \geq \int_{X\setminus P_\varepsilon}\varphi^p\,dm-o(\varepsilon)\cdot \varepsilon\geq -o(\varepsilon)\cdot \varepsilon.$$ Dividing through by $p\,\varepsilon$ and letting $\varepsilon\rightarrow 0+$, we obtain $$\int f\varphi^{p-1}\,dm=\lim_{\varepsilon\rightarrow 0+}\int_{P_{\varepsilon}} f\varphi^{p-1}\,dm\geq 0,$$ by the dominated convergence theorem. Therefore, condition $(B_p)$ holds if $\varphi$ is extremal for the $p$-modulus of $\mathbf{E}$.

\section{Proof of Theorem \ref{L1Beurling} (Extremal Metrics in $L^1$)}\label{l1sect}

Let $\mathbf{E}$ be a measure system and let $\varphi$ be an admissible metric for $\mathbf{E}$ such that $\varphi\in L^1(m)$. If $\varphi=0$ $m$-a.e., then $\varphi$ is extremal for the $1$-modulus of $\mathbf{E}$ and condition $(B_1)$ holds with $\mathbf{F}=\emptyset$. Thus, we assume that $0<\int \varphi\,dm<\infty$.

Suppose that condition $(B_1)$ holds. Let $\psi$ be an admissible metric for $\mathbf{E}\cup\mathbf{F}$ with $\psi\in L^1(m)$. Then
$\int \psi\,d\nu \geq 1=\int \varphi\, d\nu$ for every $\nu\in\mathbf{F}$, by (b). Hence the function $f=\psi-\varphi\in L^1(m)$ takes values in $[-\infty,\infty]$, $\int f\,d\nu\geq 0$ for all $\nu\in\mathbf{F}$ and $f(x)\geq 0$ whenever $\varphi(x)=0$. Since $f$ satisfies the hypothesis of (c), we obtain $\int (\psi-\varphi)\,dm \geq 0$. That is, $\int\varphi\,dm\leq \int \psi\, dm$, for every admissible metric $\psi$. Thus, $\varphi$ is extremal for the $1$-modulus of $\mathbf{E}\cup\mathbf{F}$. It follows that $\Mod_1\mathbf{E}\leq \int\varphi\,dm=\Mod_1\mathbf{E}\cup\mathbf{F}=\Mod_1\mathbf{E}$, by (a). Therefore, $\varphi$ is extremal for the $1$-modulus of $\mathbf{E}$.

Conversely, suppose that $\varphi$ is extremal for the 1-modulus of $\mathbf{E}$. Then $\varphi$ is also extremal for the 1-modulus of $\mathbf{E}_0=\{\mu\in\mathbf{E}:1\leq \int\varphi\,d\mu<\infty\}$. We want to check that condition $(B_1)$ holds. Assign $\mathbf{F}$ to be the family of all measures $\nu$ defined on $\mathcal{M}$ such that $\int \varphi\,d\nu=1$. Then (b) is satisfied automatically. For (a), $\Mod_1\mathbf{E}\leq \Mod_1 \mathbf{E}\cup\mathbf{F}\leq \int \varphi\, dm= \Mod_1\mathbf{E},$ since $\varphi$ is admissible for $\mathbf{E}\cup\mathbf{F}$ and $\varphi$ is extremal for the $1$-modulus of $\mathbf{E}$. It remains to verify (c). Assume that $f\in L^1(m)$ takes values in $[-\infty,\infty]$, $\int f\, d\nu\geq 0$ for all $\nu\in\mathbf{F}$ and $f(x)\geq 0$ whenever $\varphi(x)=0$.
For all $\varepsilon>0$ the metric $\varphi_\varepsilon=(\varphi+\varepsilon f)^+\geq 0$ belongs to $L^1(m)$, and moreover, satisfies $\int \varphi_\varepsilon\, d\nu\geq \int (\varphi+\varepsilon f)\, d\nu\geq \int \varphi\, d\nu=1$ for every $\nu\in\mathbf{F}$. Now, for all $\mu\in\mathbf{E}_0$, there exists $0<c\leq 1$ such that $c\mu\in\mathbf{F}$. Thus, $\int \varphi_\varepsilon\,d\mu\geq c\int\varphi_\varepsilon\,d\mu=\int\varphi_{\varepsilon}\,d(c\mu) \geq 1$ for all $\mu\in\mathbf{E}_0$. This shows that the metric $\varphi_\varepsilon$ is also admissible for $\mathbf{E}_0$, and hence, $$\int \varphi\,dm=\Mod_1\mathbf{E}_0\leq \int \varphi_{\varepsilon}\,dm=\int (\varphi+\varepsilon f)^+\,dm=\int_{P_\varepsilon}(\varphi+\varepsilon f)\,dm,$$ where $P_\varepsilon=\{\varphi+\varepsilon f>0\}$.
This yields $\int_{P_\varepsilon} f\,dm \geq \varepsilon^{-1}\int_{X\setminus P_\varepsilon} \varphi\,dm\geq 0$ for all $\varepsilon>0$. As $\varepsilon\rightarrow 0+$, the characteristic functions $\chi_{P_\varepsilon}$ converge $m$-a.e.~ to the function $\chi_P$ where $P=\{\varphi>0\}\cup\{\varphi=0,f>0\}$ (convergence at $x\in X$ fails if $f(x)=-\infty$). Therefore, $\int_P f\,dm\geq 0$, and because we assumed that $f(x)\geq 0$ whenever $\varphi(x)=0$, we obtain $\int f\,dm=\int_P f\,dm+\int_{\{\varphi=0,f=0\}}f\,dm\geq 0$, as well. This completes the proof that condition $(B_1)$ holds whenever $\varphi$ is extremal for the $1$-modulus of $\mathbf{E}$.

\section{Modification for Curve Families in $\RR^n$}\label{modify}

The conditions $(B'_p)$ of Corollaries \ref{Lpcurve} and \ref{L1curve} are sufficient tests for metrics to be extremal for $\Mod_p\Gamma$ by Theorems \ref{LpBeurling} and \ref{L1Beurling}. To verify that the conditions $(B'_p)$ are also necessary, the proofs of Theorems \ref{LpBeurling} and \ref{L1Beurling} can be modified, as follows.

Let $1\leq p<\infty$. Let $\Gamma\subset\RR^n$ be a curve family and let $\varphi$ be an extremal metric for the $p$-modulus of $\Gamma$ such that $0<\int_{\RR^n}\varphi^p<\infty$. Then the metric $\varphi$ is also extremal for the $p$-modulus of $\Gamma_0=\{\gamma\in\Gamma:1\leq\int_\gamma\varphi\,ds<\infty\}$. We want to check that condition $(B_p')$ holds. Assign $\Gamma'$ to be the family of all curves $\gamma$ in $\RR^n$ such that $\int_\gamma\varphi\,ds=1$. Then (b) holds by definition. To show (a), simply note that $\Mod_p\Gamma\leq\Mod_p\Gamma\cup\Gamma' \leq \int_{\RR^n}\varphi^p = \Mod_p\Gamma$, because $\varphi$ is admissible for $\Gamma\cup\Gamma'$ and $\varphi$ is extremal for the $p$-modulus of $\Gamma$. It remains to verify (c). Assume that $f\in L^p(\RR^n)$ takes values in $[-\infty,\infty]$ and $\int_\gamma f\,ds\geq 0$ for all $\gamma\in\Gamma'$. In the case $p=1$, also assume that $f(x)\geq 0$ whenever $\varphi(x)=0$. For all $\varepsilon>0$, the metric $\varphi_\varepsilon=(\varphi+\varepsilon f)^+\geq 0$ belongs to $L^p(\RR^n)$. Moreover, $\int_\gamma \varphi_\varepsilon\, ds\geq \int_\gamma (\varphi+\varepsilon f)\, ds\geq \int_\gamma \varphi\, ds=1$ for all $\gamma\in\Gamma'$. If $\gamma\in\Gamma_0$, then $$1\leq \int_\gamma\varphi\,ds = \sum_{i}\int_{a_i}^{b_i}\varphi(\gamma_i(t))|\gamma_i'(t)|dt<\infty.$$ Since each term in the line integral is non-negative and finite, the function $$c\mapsto \int_{a_i}^c \varphi(\gamma_i(t))|\gamma_i'(t)|dt$$ on $[a_i,b_i]$ is continuous for each $i$. Hence we can pick $c_i\in[a_i,b_i]$ for all $i$ in such a way that the subcurve $\gamma_1=\bigsqcup_i \gamma([a_i,c_i])$ of $\gamma$ satisfies $\int_{\gamma_1}\varphi\,ds=1$. This means that $\gamma_1\in\Gamma'$. Thus, $\int_{\gamma}\varphi_\varepsilon\,ds\geq \int_{\gamma_1}\varphi_\varepsilon\,ds\geq 1$. This shows that $\varphi_\varepsilon$ is also admissible for $\Gamma_0$. Hence $$\int_{\RR^n} \varphi^p=\Mod_p\Gamma_0\leq \int_{\RR^n} \varphi_{\varepsilon}^p=\int_{\RR^n} \left[(\varphi+\varepsilon f)^+\right]^p.$$ To finish checking (c), one can now proceed as above. Follow the argument from section \ref{lpsect}, when $1<p<\infty$, and follow the argument from section \ref{l1sect}, when $p=1$.

\section{Proof of Theorem \ref{curvemetrics}}\label{sect4}

Suppose that $\varphi:\RR^n\rightarrow[0,\infty]$ is a Borel function and let $\Gamma_\varphi$ be the family of all curves $\gamma$ in $\RR^n$ such that $\int_{\gamma}\varphi\,ds\geq 1$. Fix any $1\leq p<\infty$ such that $0<\int_{\RR^n}\varphi^p<\infty$. We want to show that $\varphi$ is extremal for the $p$-modulus of $\Gamma_\varphi$. For each $y\in\RR^n$ let $\ell_y=y+\RR e_1\cong\RR$ denote the line through $y$ parallel to the direction $e_1=(1,0\dots,0)$. By Fubini's theorem, $\varphi\in L^p(\ell_y)$, $y=(0,\bar y)$ for $H^{n-1}$-a.e.~ $\bar y\in\RR^{n-1}$. In particular, we also have $\varphi\in L^1_{\rm loc}(\ell_y)$, $y=(0,\bar y)$ for $H^{n-1}$-a.e.~ $\bar y\in\RR^{n-1}$. Here, as above and as below, $H^s$ denotes $s$-dimensional Hausdorff measure. Below $|I|$ denotes the diameter of an interval $I$.

\begin{lemma}\label{ilemma} Suppose that $\varphi\in L^1_{\rm loc}(\ell_y)$. Then, for $H^1$-a.e.~ $x\in \ell_y$ such that $\varphi(x)>0$, there exist a sequence of positive integers $n_k=n_k(x)\rightarrow\infty$ and a sequence intervals $I_k=I_k(x)\subset \ell_y$ centered at $x$ with $|I_k|\rightarrow 0$ such that $\int_{I_k}\varphi\, dt=1/n_k$ for all $k$.
\end{lemma}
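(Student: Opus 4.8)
The plan is to reduce the statement to a one-variable problem on the line $\ell_y$ and then extract the intervals using the intermediate value theorem. Parametrizing $\ell_y$ by arclength, I regard $\varphi|_{\ell_y}$ as a nonnegative function in $L^1_{\rm loc}(\RR)$. Fix $x\in\ell_y$ and define the centered integral $F(r)=\int_{x-r}^{x+r}\varphi\,dt$ for $r\geq 0$. Since $\varphi\geq 0$ and $\varphi\in L^1_{\rm loc}$, the function $F$ is nondecreasing, continuous, and satisfies $F(0)=0$; moreover $\int_I \varphi\,dt=F(r)$ for the interval $I=[x-r,x+r]$, which is centered at $x$ and has diameter $|I|=2r$. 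Thus it suffices to produce radii $r_k\rightarrow 0$ and positive integers $n_k\rightarrow\infty$ with $F(r_k)=1/n_k$.

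First I would invoke the Lebesgue differentiation theorem: since $\varphi\in L^1_{\rm loc}(\ell_y)$, for $H^1$-a.e.\ $x\in\ell_y$ one has $F(r)/(2r)\rightarrow\varphi(x)$ as $r\rightarrow 0+$. The lemma is asserted only for $H^1$-a.e.\ $x$ with $\varphi(x)>0$, so I restrict to such a Lebesgue point. There the positive density $\varphi(x)>0$ gives $F(r)\geq r\,\varphi(x)>0$ for all sufficiently small $r>0$, and then by monotonicity $F(r)>0$ for every $r>0$. This positivity is the crux of the argument: it is exactly what guarantees that the small reciprocals $1/n$ land in the range of $F$ near the origin, and it is the reason both hypotheses ``$H^1$-a.e.'' and ``$\varphi(x)>0$'' are needed, since the pointwise value of $\varphi$ by itself carries no information about the integrals $F(r)$.

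With positivity in hand the extraction is routine. For any $\delta>0$, continuity of $F$ together with $F(0)=0$ and the intermediate value theorem gives $F([0,\delta])=[0,F(\delta)]$; since $F(r)>0$ for $r>0$, the value $0$ is attained only at $r=0$, so $F((0,\delta])=(0,F(\delta)]$. Hence every integer $n>1/F(\delta)$ is hit: there is $r\in(0,\delta]$ with $F(r)=1/n$. Taking $\delta_k=1/k$ and choosing integers $n_k$ strictly increasing with $n_k>1/F(1/k)$, I obtain $r_k\in(0,1/k]$ with $F(r_k)=1/n_k$. The intervals $I_k=[x-r_k,x+r_k]$ are then centered at $x$, satisfy $|I_k|=2r_k\leq 2/k\rightarrow 0$, and obey $\int_{I_k}\varphi\,dt=1/n_k$ with $n_k\rightarrow\infty$, as required. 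The only genuine obstacle is establishing $F(r)>0$ for all $r>0$; once the Lebesgue differentiation theorem disposes of it, the remainder is just continuity and the intermediate value theorem, which is precisely why the conclusion is stated modulo an $H^1$-null set.
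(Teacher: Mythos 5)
Your proof is correct and follows essentially the same route as the paper's: both define the centered integral function ($F$ in your notation, $g_x$ in the paper's), apply the Lebesgue differentiation theorem to get positivity of that function near $r=0$ at a.e.\ point where $\varphi>0$, and then extract the radii $r_k$ with $F(r_k)=1/n_k$ via continuity and the intermediate value theorem. Your write-up is simply a more detailed account of the same argument, spelling out the monotonicity and the choice of the integers $n_k$ that the paper leaves implicit.
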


\begin{proof} Define the function $g_x(r)=\int_{-r}^r\varphi(x+te_1)\,dt$ for all $x\in\ell_y$ and $r\geq 0$. Then $\lim_{r\rightarrow 0+} g_x(r)/2r= \varphi(x)$ for $H^1$-a.e.~ $x\in\ell_y$, by the Lebesgue differentiation theorem.
Hence for $H^1$-a.e.~ $x\in\ell_y$ such that $\varphi(x)>0$, there exists $r_0=r_0(x)>0$ such that $0<g_x(r)<\infty$ for all $0<r\leq r_0$. Since $g_x|_{[0,r_0]}$ is continuous and $g_x(0)=0$, we can find a sequence of integers $n_k=n_k(x)$ and a sequence of radii $r_k=r_k(x)\rightarrow 0$ such that $g_x(r_k)=1/n_k$. Then $I_k=I_k(x)=x+[-r_k,r_k]e_1\subset\ell_y$ is a sequence of intervals with the desired property.\end{proof}

Let $E\subset\RR^n$ be the set of points $x\in\RR^n$ where the conclusion of Lemma \ref{ilemma} holds, i.e.~ $x\in E$ if and only if there exists a sequence of positive integers $n_k=n_k(x)\rightarrow\infty$ and a sequence of intervals $I_k=I_k(x)\subset\ell_x$ centered at $x$ with $|I_k|\rightarrow 0$ such that $\int_{I_k}\varphi\,dt=1/n_k$. By Fubini's theorem and Lemma \ref{ilemma}, we have $x\in E$ for a.e.~ $x\in\RR^n$ such that $\varphi(x)>0$. We define a curve family $\Gamma'\subset\Gamma_\varphi$ as follows. Choose one pair of sequences $(n_k(x))_{k=1}^\infty$ and $(I_k(x))_{k=1}^\infty$ for each $x\in E$. Then, for each $x\in E$ and $k\geq 1$, define a curve $\gamma_k(x)=\bigsqcup_{i=1}^{n_k} I_k(x)$, i.e.~ let $\gamma_k(x)$ be a curve which covers the interval $I_k(x)$ exactly $n_k(x)$-times. Set $\Gamma'=\{\gamma_k(x):x\in E$ and $k\geq 1\}$.

To prove that $\varphi$ is extremal for the $p$-modulus of $\Gamma_\varphi$, it is enough by either Corollary \ref{Lpcurve} or Corollary \ref{L1curve} (according to whether $1<p<\infty$ or $p=1$) to show that $(B_p')$ holds for $\Gamma'$. To start note $\int_{\gamma_k(x)}\varphi\,ds=n_k(x)\int_{I_k(x)}\varphi\,dt=n_k(x)/n_k(x)=1$ for all $\gamma_k(x)\in\Gamma'$. This shows that (b) holds. And, since $\Gamma'\subset\Gamma_\varphi$, (a) is true too. To verify (c), assume that $f\in L^p(\RR^n)$ takes values in $[-\infty,\infty]$ and $\int_{\gamma_k(x)} f\,ds\geq 0$ for all $\gamma_k(x)\in\Gamma'$. Moreover, if $p=1$, assume that $f(x)\geq 0$ whenever $\varphi(x)=0$. By Fubini's theorem and the Lebesgue differentiation theorem, $$f(x)=\lim_{k\rightarrow\infty} \frac{1}{|I_k(x)|}\int_{I_k(x)} f\,dt$$ for a.e.~ $x\in E$, and in particular, for a.e.~ $x\in\RR^n$ such that $\varphi(x)>0$. By assumption, $$\int_{I_k(x)} f\,dt=\frac{1}{n_k(x)}\int_{\gamma_k(x)} f\,ds \geq 0\quad\text{for all }x\in E\text{ and }k\geq 1.$$ Thus, combining the two displayed equations, $f(x)\geq 0$ at a.e.~ $x\in\RR^n$ such that $\varphi(x)>0$. It follows that $\int_{\RR^n} f\varphi^{p-1} \geq 0$, if $1<p<\infty$, and $\int_{\RR^n} f\geq 0$, if $p=1$. Hence (c) holds. Therefore, $(B_p')$ holds and $\varphi$ is extremal for the $p$-modulus of $\Gamma_\varphi$.

\end{document}